\newcommand{\FR}[1]{\medskip

  \noindent
  \framebox{\parbox{\dimexpr\linewidth-2\fboxsep-2\fboxrule}{#1}}\medskip

}
\newtheorem{Theorem}{Theorem}[section]
\newtheorem{Lemma}[Theorem]{Lemma}
\newtheorem{Computation}[Theorem]{Computation}
\newtheorem{Proposition}[Theorem]{Proposition}
\theoremstyle{definition} \newtheorem{Definition}[Theorem]{Definition}
\newtheorem{Example}[Theorem]{Example}
\newtheorem{Remark}[Theorem]{Remark}
\numberwithin{equation}{section}
\DeclareMathOperator{\coker}{Coker}%
\DeclareMathOperator{\codim}{codim}%
\DeclareMathOperator{\reg}{reg}%
\newcommand*{\defeq}{\mathrel{\vcenter{\baselineskip0.5ex
      \lineskiplimit0pt \hbox{\scriptsize.}\hbox{\scriptsize.}}}
  =}%
\def \QQ {\mathbb{Q}}%
\newcommand{\llar}{-\kern-5pt-\kern-5pt\longrightarrow}
\def\restr{{\kern-1pt\restriction\kern-1pt}}
\title{Large Lower Bounds for the Betti Numbers of Graded
  Modules with Low Regularity}
\author{Adam Boocher}%
\address{A.\ Boocher,
  University of San Diego, San Diego, California, USA}
\email{aboocher@sandiego.edu and aboocher@gmail.com}
\author{Derrick Wigglesworth} %
\address{ D.\ Wigglesworth, University of Arkansas, Fayetteville,
  Arkansas, U.S.A.}  \email{drwiggle@uark.edu and
  derrick.wigglesworth@gmail.com}
\date{\today}
\begin{document}
\maketitle

\begin{abstract}Suppose that $M$ is a finitely-generated graded module
  (generated in degree $0$) of codimension $c\geq 3$ over a polynomial
  ring and that the regularity of $M$ is at most $2a-2$ where
  $a\geq 2$ is the minimal degree of a first syzygy of $M$.  Then we
  show that the sum of the betti numbers of $M$ is at least
  $\beta_0(M)(2^c + 2^{c-1})$.  Additionally, under the same
  hypothesis on the regularity, we establish the surprising fact that
  if $c \geq 9$ then the first half of the betti numbers are each at
  least twice the bound predicted by the Buchsbaum-Eisenbud-Horrocks
  rank conjecture: for $1\leq i \leq \frac{c+1}{2}$,
  $\beta_i(M) \geq 2\beta_0(M){c \choose i}$.
\end{abstract}

\section{Introduction}

Let $S = k[x_1,\ldots,x_n]$ be a polynomial ring over a field $k$ and
let $M$ be a finitely generated graded $S$-module of finite length.
The total betti number
$\beta(M) \defeq \beta_0(M) + \cdots + \beta_n(M)$ is defined to be
the sum of the betti numbers of $M$.  This number has been of recent
interest, most notably in the context of the Total Rank Conjecture
which predicts that $\beta(M) \geq 2^n$.  If $\mathrm{char}(k)\neq 2$,
this conjecture was recently proved by Walker \cite{W}, who also
showed that equality holds if and only if $M$ is isomorphic to $S$
modulo a regular sequence -- such modules are called complete
intersections.

Evidently if $M$ is not a complete intersection, then $\beta(M) > 2^n$
and since $\beta(M)$ must be even, it follows that
$\beta(M)\geq 2^n +2$.  In fact, there is reason to believe that if
$M$ is not a complete intersection then $\beta(M)$ must be
considerably larger than $2^n$.  It was asked by Charalambous, Evans,
and Miller in \cite{CEM} whether it is true that
$\beta(M) \geq 2^n +2^{n-1}$.  They proved that this is the case for
arbitrary graded modules $M$ when $n \leq 4$ and for all $n$ when $M$
is multi-graded.  We remark that if $M$ is not of finite length, then
the natural extension is to claim that
\begin{equation}\label{strong bound eq}
  \beta(M) \geq 2^c + 2^{c-1} \ \  \mbox{where $c$ is the codimension of $M$.}
\end{equation}  
Such an extension has recently been obtained for monomial ideals in
\cite{BS} where it was also proved that equality is possible for all
$c \geq 2$.  The aim of the present paper is to prove that
(\ref{strong bound eq}) holds for arbitrary $M$ provided that the
regularity of $M$ is small relative to the degrees of its first
syzygies.
\begin{Theorem}\label{MainTheorem}
  Let $M$ be a graded $S$-module of codimension $c\geq 3$ generated in
  degree $0$ and let $a\geq 2$ be the minimal degree of a first syzygy
  of $M$.  If $\reg(M) \leq 2a-2$, then
$$\beta(M) \geq \beta_0(M)(2^c+2^{c-1}).$$
\end{Theorem}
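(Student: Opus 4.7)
My approach is via the Boij--S\"oderberg decomposition. By Eisenbud--Schreyer, the Betti table of $M$ admits a decomposition
\[
\beta(M) \;=\; \sum_{\dd} \lambda_\dd \, \pi(\dd), \qquad \lambda_\dd \in \QQ_{\geq 0},
\]
as a nonnegative rational combination of pure diagrams $\pi(\dd)$ indexed by strictly increasing integer degree sequences $\dd = (d_0, d_1, \ldots, d_p)$. The hypotheses on $M$ translate into the following constraints on any $\dd$ that contributes: $d_0 = 0$ (generation in degree $0$); $d_i \geq a + i - 1$ for $i \geq 1$ (combining the first syzygy bound with minimality of the resolution); $d_p - p \leq 2a-2$ (regularity); and $p \geq c$ (codimension). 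Equivalently, the shifted values $e_i := d_i - i$ form a weakly increasing sequence in the length-$(a-1)$ window $[a-1,\, 2a-2]$.

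Since $\beta_0(\cdot)$ and the total Betti number $\beta(\cdot)$ are both additive under the decomposition and all coefficients $\lambda_\dd$ are nonnegative, the theorem reduces to the uniform pure-diagram inequality
\[
\frac{\beta(\pi(\dd))}{\beta_0(\pi(\dd))} \;\geq\; 2^c + 2^{c-1}
\]
for every admissible $\dd$. The Herzog--K\"uhl formulas give the closed form
\[
\frac{\beta_i(\pi(\dd))}{\beta_0(\pi(\dd))} \;=\; \frac{d_1 d_2 \cdots d_p}{d_i \prod_{j \neq 0,\, i} |d_j - d_i|}
\]
for $i \geq 1$, so the target inequality becomes a purely combinatorial statement about strictly increasing integer sequences in the prescribed window.

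I would attack this combinatorial inequality in two steps. The first is a reduction to the Cohen--Macaulay case $p = c$: small examples (e.g., extending $(0,2,3)$ to $(0,2,3,4)$ to $(0,2,3,4,5)$) suggest a monotonicity principle to the effect that lengthening an admissible degree sequence only increases the normalized total Betti number. Granting this, it suffices to minimize over sequences of length $c+1$. The second step is to identify the extremal (minimizing) admissible sequences: these are expected to be the sequences whose signature $e_i$ attains \emph{both} extremes $a-1$ and $2a-2$, parameterized by the index at which the jump from $a-1$ to $2a-2$ occurs. For example, at $a=2$, $c=3$ both $(0,2,3,5)$ and $(0,2,4,5)$ yield normalized total Betti number exactly $12 = 2^3 + 2^2$, which is consistent with the conjectured sharpness. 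One would verify the bound at these extremal sequences directly from the Herzog--K\"uhl formula and then propagate the estimate to all other admissible $\dd$.

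The main obstacle is the last step: the inequality is sharp along an entire family of extremal sequences, so no slack is available and each estimate must be tight. The Herzog--K\"uhl ratios are not monotone in the individual $d_j$, so the estimate cannot be obtained by crude bounding. I expect the cleanest route is to rewrite $\sum_i \beta_i / \beta_0$ as a manageable symmetric expression in the $d_j$ (exploiting the Herzog--K\"uhl equations themselves) and then proceed by induction on $c$ or on the transition index of the signature. The hypothesis $c \geq 3$ looks essential here: the corresponding codimension-$2$ bound $2^2 + 2^1 = 6$ is tight already at the sequence $(0,a,a+1)$, so the induction requires a genuinely three-dimensional base case.
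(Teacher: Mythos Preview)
Your Boij--S\"oderberg reduction is correct and matches the paper exactly: the theorem does reduce to the pure-diagram inequality
\[
\sum_{i} \pi_i(\dd) \;\geq\; 2^c + 2^{c-1}
\]
for every admissible $\dd$, and you have identified the relevant constraints correctly. The divergence is entirely in how you propose to prove this numerical statement, and here your plan is incomplete.

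You propose (i) a monotonicity-in-length reduction to $p=c$, and (ii) identification of the extremal sequences as those whose signature $e_i$ jumps from $a-1$ to $2a-2$, followed by direct verification and ``propagation.'' Neither step is carried out: the length-monotonicity is supported only by small examples, the extremal characterization is stated as an expectation, and you explicitly flag the propagation step as the main obstacle without offering a mechanism. Since the inequality is sharp along a whole family, your own warning applies---there is no slack, and it is not clear your scheme can close.

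The paper's route is quite different and avoids both of your proposed reductions. It proves the stronger statement $\sum_i \pi_i(D) \geq 2^n + 2^{n-1}$ at \emph{every} length $n\geq 3$ (so the reduction to $p=c$ is unnecessary), and it does so not by locating extremal sequences but by bounding the \emph{individual} entries: for $1\leq i\leq \lceil n/2\rceil$ one shows $\pi_i(D)\geq 2\binom{n}{i}$, and combines this with Erman's bound $\pi_i(D)\geq \binom{n}{i}$ on the second half. The individual bounds come from a five-variable auxiliary function $F(a,b,e,n,i)$ obtained by pushing the entries of $D$ to extremal positions on either side of index $i$; one then establishes monotonicity of $F$ in $a$ and (for $n\geq 2i-1$) in $n$, reducing to a two-variable function $G(b,e,i)$ that is handled directly. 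Small $n$ (namely $3\leq n\leq 8$) require separate polynomial-inequality arguments and a finite computer check. So while your framework is right, the substantive numerical work---which is the bulk of the paper---proceeds along entirely different lines than what you sketched.
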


Our result is an extension of work by Erman \cite{Erman}, where he
proved, under the same hypothesis on the regularity, that
$\beta_i(M) \geq \beta_0(M){c \choose i}.$
Erman's work proves a special case of the Buchsbaum-Eisenbud-Horrocks
rank conjecture which states that $\beta_i(M)\geq {c \choose i}$.
Naturally, Erman's bound will imply that
$\beta(M) \geq \beta_0(M) 2^c$ when the regularity hypothesis holds.
Noting that $2^c + 2^{c-1} = (1.5)(2^{c})$, the stronger bound in
Theorem \ref{MainTheorem} asserts that on average, each betti number
$\beta_i(M)$ is at least $1.5$ times $\beta_0(M) {c \choose i}$.  We
achieve this bound by showing that except in a small number of cases
(which arise with $c \leq 8$) it is true that the first half of the
betti numbers are at least $2\beta_0(M){ c\choose i}$.
\begin{Theorem}\label{corollaryIntro}
  Let $M$ be a graded $S$-module of codimension $c\geq 9$ generated in
  degree $0$ and let $a\geq 2$ be the minimal degree of a first syzygy
  of $M$.  If $\reg(M) \leq 2a-2$ then for each
  $1\leq i\leq \lceil c/2\rceil$,
  $\beta_i(M)\geq 2\beta_0(M){c \choose i}$.
\end{Theorem}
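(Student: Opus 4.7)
The plan is to use Boij--Söderberg theory to reduce the claim to a combinatorial inequality on pure Betti diagrams, and then to verify this inequality by case analysis. By the Eisenbud--Schreyer theorem, the Betti diagram of $M$ decomposes as
\[
\beta(M) = \sum_{\pi} q_\pi\,\beta(\pi), \qquad q_\pi \in \QQ_{\geq 0},
\]
where each $\pi$ is a pure diagram with strictly increasing degree sequence $\dd = (d_0, \ldots, d_c)$. The hypotheses on $M$ force every $\pi$ in the decomposition to satisfy $d_0 = 0$ (since $M$ is generated in degree $0$), $d_1 \geq a$ (from the minimal first-syzygy condition), and $d_c \leq c + 2a - 2$ (from the regularity bound). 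By nonnegativity of the coefficients $q_\pi$, it suffices to verify, for every admissible $\pi$,
\[
\beta_i(\pi) \geq 2\binom{c}{i}\beta_0(\pi), \qquad 1 \leq i \leq \lceil c/2\rceil.
\]

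Using the Herzog--Kühl formulas
\[
\frac{\beta_i(\pi)}{\beta_0(\pi)} = \frac{\prod_{j=1}^c d_j}{\prod_{j \neq i}|d_j - d_i|},
\]
the problem reduces to an explicit inequality in the $d_j$. The binding case is $a = 2$: for $a \geq 3$, the tightest admissible sequence $(0, a, a+1, \ldots, a+c-1)$ already yields $\beta_1(\pi)/\beta_0(\pi) = \binom{a+c-1}{c-1}$, which comfortably exceeds $2\binom{c}{1}$ for all $c \geq 3$, and direct estimates handle the remaining admissible sequences when $a \geq 3$. For $a = 2$ the admissible pure diagrams form only three families: $(0, 2, 3, \ldots, c+1)$; $(0, 3, 4, \ldots, c+2)$; and the ``skip'' family $\pi_k := (0, 2, 3, \ldots, k-1, k+1, \ldots, c+2)$ for $k = 3, \ldots, c+1$. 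The first two families admit direct verification; for example, $(0, 2, 3, \ldots, c+1)$ gives $\beta_i(\pi)/\beta_0(\pi) = \tfrac{i(c+1)}{i+1}\binom{c}{i}$, exceeding $2\binom{c}{i}$ whenever $c \geq 3$.

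The skip family $\pi_k$ supplies the tight case. A direct computation yields, at $i = 1$,
\[
\frac{\beta_1(\pi_k)}{\beta_0(\pi_k)} = \frac{(c+2)(c+1)(k-2)}{2k},
\]
which is increasing in $k$ and therefore minimized at $k = 3$, where it equals $(c+2)(c+1)/6$. The required inequality $(c+2)(c+1)/6 \geq 2c$ rearranges to $c^2 - 9c + 2 \geq 0$, whose integer solutions are precisely $c \geq 9$; this is exactly the threshold appearing in the theorem. For $2 \leq i \leq \lceil c/2 \rceil$, analogous closed-form expressions for $\beta_i(\pi_k)/\beta_0(\pi_k)$---which split into subcases $i < k - 1$ and $i \geq k - 1$ according as the skip lies to the right or left of $d_i$---can be shown to exceed $2\binom{c}{i}$ uniformly in $k$ once $c \geq 9$, with considerable margin for $i \geq 3$.

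The principal obstacle is this last combinatorial case analysis across the skip family. The restriction $i \leq \lceil c/2 \rceil$ is essential: the diagram $\pi_{c+1} = (0, 2, 3, \ldots, c, c+2)$ satisfies $\beta_c(\pi_{c+1}) = \beta_0(\pi_{c+1})$, so the doubled bound fails at $i = c$. This structural asymmetry arises because a single skip suppresses Betti numbers only in a bounded neighbourhood of the skipped position; in the first half of the resolution the constraint is sharpest at $k = 3$ and index $i = 1$, producing the threshold $c \geq 9$.
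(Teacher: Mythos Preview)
Your overall strategy---Boij--S\"oderberg decomposition followed by a combinatorial inequality on pure diagrams---is exactly the paper's, but the proposal has gaps that the paper's systematic analysis is built to avoid.

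First, the Boij--S\"oderberg decomposition of a module of codimension $c$ in $S = k[x_1,\ldots,x_n]$ produces pure diagrams whose degree sequences have length anywhere from $c+1$ to $n+1$, not just $c+1$ (Theorem~\ref{BSThmES}). Your enumeration $(d_0,\ldots,d_c)$ silently assumes $M$ has finite length. This is fixable---for a pure diagram of length $m+1 \geq c+1$ one shows $\pi_i(D) \geq 2\binom{m}{i} \geq 2\binom{c}{i}$---but it needs to be said, and it means the numerical inequality must be proved for \emph{all} lengths $\geq c+1$, not just exactly $c+1$.

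Second, and more seriously, the reduction to the ``binding case $a=2$'' is not justified. The constraint on the pure diagrams in the decomposition is $d_1 \geq a$ and $d_c \leq c + 2a - 2$, so for $a = 3$ the admissible sequences include $(0,3,5,6,\ldots,c+4)$, $(0,3,4,6,7,\ldots,c+4)$, and many others that do not appear in your three families. Verifying the linear sequence and asserting that ``direct estimates handle the remaining admissible sequences when $a \geq 3$'' is precisely the missing content: as $a$ grows the number of admissible sequences grows rapidly, and there is no obvious monotonicity in $a$ at the level of individual sequences. Likewise, the skip-family verification for $i \geq 2$ is only asserted.

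The paper handles these difficulties differently. Rather than enumerate sequences, it replaces an arbitrary admissible $D$ by an extremal modification $D^i$ (pushing $d_1,\ldots,d_{i-1}$ as low as possible and $d_{i+1},\ldots,d_n$ as high as possible), proves $\pi_i(D) \geq \pi_i(D^i)$, and writes $\pi_i(D^i)/\binom{n}{i}$ as an explicit function $F(a,b,e,n,i)$ of five parameters. Monotonicity lemmas in $a$, in $n$ (under the regularity hypothesis), and in $i$ then reduce the claim $F \geq 2$ to a handful of boundary evaluations. This scales uniformly over all $a$, all lengths, and all $i$---precisely where explicit enumeration becomes unwieldy. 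Your identification of the threshold $c^2 - 9c + 2 \geq 0$ at $i=1$ is correct and matches the paper's Lemma~\ref{lem:G1n9}, so your intuition about where the sharp case lives is right; what is missing is the machinery that reduces every other case to something equally tractable.
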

This result implies a rather strong connection between the regularity
of $M$ and its first few betti numbers.
In the Artinian (finite-length) case, since the regularity can be
interpreted as the socle degree, we can understand this result as
making more precise the idea that having a small number of generators
will naturally lead to a high socle degree.  Our theorem provides
bounds on this relationship which are new (even in the Artinian case).

As an example, consider the following statements for quadrics: Suppose
$I$ is an ideal generated by quadrics of codimension $c \geq 9$.  If
$I$ has precisely $c$ minimal generators then $S/I$ is a complete
intersection with regularity $c$.  On the other hand, the ideal
$I = (x_1, \ldots, x_c)^2$ has ${c+1 \choose 2}$ minimal generators
and then $S/I$ has regularity $1$.  Theorem \ref{corollaryIntro}
implies that for the regularity of $S/I$ to drop below $3$, $I$ must
have at least $2c$ minimal generators.

It seems to us rather bizarre that this theorem (like Erman's results)
should depend almost completely on the numerics coming from
Boij-S\"oderberg Theory.  This mysterious behavior is also apparent in
McCullough's work in \cite{McC} concerning the relationship between
the regularity of an ideal and the degrees of half of its syzygies.
In this vein, our results can be interpreted as saying that the degree
of the first syzygy and the number of syzygies in the first half of
the resolution can in some cases force the regularity to be large.  We
remark that the regularity bound is actually relaxed enough to include
many interesting geometric examples.  In \cite{Erman}, Erman presents
several examples of modules that satisfy $\reg(M) \leq 2a - 2$
including smooth curves embedded by linear systems of high degree,
toric surfaces, and Artinian rings $M = S/I$ whose socle degree is
relatively low.

We comment now on our methods and how they differ from those of Erman.
We begin as he did with standard Boij-S\"oderberg techniques to write
an arbitrary betti diagram as a rational combination of normalized
\emph{pure} betti diagrams, whose entries $\pi_i(D)$ are each a
function of $n$ positive integers $D = (d_1, \ldots, d_n)$.  In
sections 2 and 3 we show that the proofs of our main theorems reduce
to finding lower bounds for $\pi_i(D)$.  Like Erman we reduce these
calculations to the study of a function $F(a,b,e,n,i)$ of $5$
variables.  It is here that our analysis differs substantially from
that of Erman.

Since Erman was concerned with a uniform bound for all betti numbers,
his proof (in our notation) shows that $F(a,b,e,n,i) \geq 1$.  As we
mentioned above, our main strategy to prove Theorem \ref{MainTheorem}
is to focus on the first half of the betti numbers and prove that they
are at least twice the bound that Erman proved.  Roughly speaking we
then want to show that $F(a,b,e,n,i) \geq 2$ for small $i$.  Since
this statement is not true for all $i$ (nor is it true if the
codimension $n$ is less than $9$) our analysis necessarily proceeds in
a delicate way.  In addition, if $n\leq 8$, since Theorem
\ref{MainTheorem} holds whereas \ref{corollaryIntro} does not,
independent techniques are developed to address these cases.  What
ultimately makes the proofs difficult is that even if one fixes all
but one variable, it is not necessarily the case that $F$ is an
increasing function, and thus finding its minimum requires some
care. Moreover, there are a whole host of cases where our general
method fails -- these arise primarily when the difference between the
regularity of $M$ and the generating degree of a first syzygy of $M$
is very small.  The reduction via Boij-S\"oderberg theory necessitates
that we consider all of these cases, as otherwise our results would be
significantly weaker.  These special cases complicate the structure of
our proof as evidenced by the flowchart (Fig.\ \ref{fig:FlowN9}) which
demonstrates how all the pieces fit together.

\section{Boij-S\"oderberg Basics}
\label{BS-Basics}
In this section we will review the relevant pieces of Boij-S\"oderberg
theory.  Rather than state the theory in its fullest generality, we
present only the version we need for our results.  We begin with an
example.

\begin{Example}
  Let $S = \QQ[x,y,z]$ and take $I$ to be an ideal generated by $5$
  random quadrics.  Set $M = S/I$.  Similarly, let $\phi$ be a
  $3\times 10$ matrix of random quadrics and let $N = \coker \phi$.
  Finally, let $M' = S/(x^2,y^2,z^2,xy)$.  The betti diagrams of $M,N$
  and $M'$ are given below:
  \begin{equation*}
    \begin{array}{c|cccc}
      \beta(M) & &&& \\
      \hline 
               & 1 & - & - & - \\ 
               & - & 5 & 5 & - \\ 
               & - & - & - & 1 \\ 
    \end{array}, \ \
    \begin{array}{c|cccc}
      \beta(N) & &&& \\
      \hline 
               & 3 & - & - & - \\ 
               & - & 10 & - & - \\ 
               & - & - & 15 & 8 \\ 
    \end{array},
    \begin{array}{c|cccc}
      \beta(M') & &&& \\
      \hline 
                & 1 & - & - & - \\ 
                & - & 4 & 2 & - \\ 
                & - & - & 3 & 2 \\ 
    \end{array}
  \end{equation*}
  We point out that the first two diagrams are \textbf{pure} in the
  sense that each column has at most one nonzero entry.  The last
  betti diagram is not pure since the column representing the second
  syzygy module has two nonzero entries.  Further, note that each of
  the first two diagrams is a \textbf{sub-diagram} of the third
  diagram, in the sense that the locations of the nonzero entries of
  the first two fit inside the third diagram.  This will be made
  explicit in what follows.

  Finally, we notice the rather astonishing fact that the third betti
  diagram (thought of as a matrix) can be written as a positive
  rational linear combination of the first two diagrams:
$$\beta(M') = \frac25 \beta(M) + \frac15 \beta(N).$$
The above example is an instance of the following, which is a summary
of the main results in Boij-S\"oderberg Theory.

\FR{ { \centering ``The betti diagram of an (arbitrary) finite-length
    module \\ can be written as a positive rational linear combination
    of pure diagrams.''

  } }
\end{Example}

\noindent We now set $S = k[x_1, \ldots, x_n]$ and work with finitely
generated graded $S$-modules $M$. Henceforth all of our modules will
be assumed to be generated in degree $0$; allowing for shifting, this
is tantamount to saying that $M$ is generated in a single degree.  If
$M$ is a finite length module and each syzygy module of $M$ is
generated in a single degree then we will say that $M$ \textbf{has a
  pure resolution} (or that $M$ \textbf{is pure}).  Note that we
require pure modules have finite length.  For a pure module $M$ we let
$D\colon (d_0=0) < d_1 < \cdots < d_n$ be the sequence whose $i$-th
entry is the degree of the generators of the $i$-th syzygy module of
$M$.  This increasing sequence of integers $D$ is called the
\textbf{degree sequence of $M$}.  By $\reg(D)$ we will mean the number
$d_n - n$, which corresponds to the regularity of the module $M$.

\begin{Remark}
  A finite length module $M$ is pure with degree sequence
  $D\colon (d_0=0) < d_1 < \cdots < d_n$ if and only if for each
  $i=0, \ldots, n$, the graded betti numbers of $M$ satisfy
$$\beta_{ij}(M) \neq 0 \iff j = d_i.$$
\end{Remark}

Remarkably, the betti numbers of pure modules are determined up to
scalar multiple.  Indeed, if a finite length module $M$ is pure with
degree sequence $D$ then there is a scalar $\lambda\in \QQ$ so that
for all $i$, the following holds:
\begin{equation}\label{TheHerzogKuhlEquations}
  \beta_i(M)
  = \beta_{i,d_i}
  =  \lambda  \pi_i(D)
  \text{ with }
  \pi_i(D) = \frac{d_1\cdots d_n}{\prod_{i\neq j} (d_i-d_j)}.
\end{equation}
This was first proven by Herzog and K\"uhl \cite{HK} and the
equalities above are called the Herzog-K\"uhl equations.  Note that
since $\pi_0(D) = 1$ we have that $\lambda= \beta_0(M)$. In order to
prove Theorems \ref{MainTheorem} and \ref{corollaryIntro} we will
study the rational functions $\pi_i$ and establish the following two
theorems.

\begin{Theorem}\label{PureDiagramBound}
  Suppose that $n\geq 3$ and $D\colon 0 < d_1< \ldots <d_n$ is a
  degree sequence of length $n+1$ with $d_1\geq 2$ satisfying
  $\reg(D) \leq 2d_1 - 2$.  Then $\sum \pi_i(D) \geq 2^n + 2^{n-1}.$
\end{Theorem}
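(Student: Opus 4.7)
The plan is to bootstrap off Erman's pointwise lower bound $\pi_i(D) \geq \binom{n}{i}$ (established under the same hypotheses) by strengthening it to $\pi_i(D) \geq 2\binom{n}{i}$ on the first half of the indices. If this stronger bound holds for $1 \leq i \leq \lceil n/2 \rceil$, then together with $\pi_0(D) = 1$ and Erman's inequality on the upper half,
\[ \sum_{i=0}^{n} \pi_i(D) \;\geq\; 1 + \sum_{i=1}^{\lceil n/2 \rceil} 2\binom{n}{i} + \sum_{i=\lceil n/2 \rceil+1}^{n} \binom{n}{i} \;=\; 2^n + \sum_{i=1}^{\lceil n/2 \rceil} \binom{n}{i} \;\geq\; 2^n + 2^{n-1}, \]
where the last inequality follows from the symmetry of binomial coefficients. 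Thus the whole theorem reduces to a doubling estimate on the first half of the pure betti numbers.

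To control $\pi_i(D)$, I would adopt the framework suggested by the introduction and reduce to an auxiliary function $F(a,b,e,n,i)$ of five variables that captures the essential data of a degree sequence: the initial gap $a = d_1$, an internal parameter $b$, the terminal degree or regularity $e$, the codimension $n$, and the index $i$. The key reduction step is that the minimum of $\pi_i(D)/\binom{n}{i}$ over all valid sequences with prescribed $a$ and $\reg(D) = e$ is attained on a restricted family of ``near-consecutive'' degree sequences whose Herzog--K\"uhl values can be written in closed form as $F$. Theorem \ref{PureDiagramBound} then reduces to proving
\[ F(a,b,e,n,i) \;\geq\; 2 \qquad \text{for } 1 \leq i \leq \lceil n/2 \rceil. \]

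The hard part is establishing this inequality. Unlike Erman's inequality $F \geq 1$, the stronger bound $F \geq 2$ is delicate: $F$ is not monotone in any one of its variables, so locating its minimum over the allowed region requires careful comparison between adjacent parameter values. I would fix $n$ and $i$ and analyze $F$ on the boundary of its domain, where the regularity constraint $e \leq 2a-2$ is tight and the spread $b-a$ is forced to be large. The main obstacle is a small ``bad zone'' of parameters — essentially when $\reg(D)$ is very close to $2a-2$ — where $F$ can slip below $2$ for specific indices; these exceptional cases must be enumerated and treated by direct estimates, possibly by shifting excess onto neighboring indices so that the desired $2^{n-1}$ is recovered only in aggregate rather than index by index.

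Finally, for codimensions $3 \leq n \leq 8$ the pointwise bound $F \geq 2$ on the first half fails (this is exactly why Theorem \ref{corollaryIntro} is stated only for $n \geq 9$). For these small values I would handle each $n$ individually: reduce via Boij--S\"oderberg decomposition to a finite or one-parameter family of extremal degree sequences satisfying $\reg(D) \leq 2d_1 - 2$, and verify $\sum \pi_i(D) \geq 2^n + 2^{n-1}$ by direct calculation on that family. The proof of Theorem \ref{PureDiagramBound} is therefore driven by one conceptual idea — doubling on the first half — but realized through substantial case analysis governed by the interplay between $n$, $i$, and the tightness of the regularity hypothesis, which is precisely the structure one sees reflected in the flowchart alluded to in the introduction.
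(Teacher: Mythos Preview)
Your proposal is correct and follows essentially the same route as the paper: reduce to the doubling bound $\pi_i(D)\geq 2\binom{n}{i}$ on the first half of the indices, combine with Erman's bound on the remaining indices to obtain $\sum\pi_i\geq 2^n+2^{n-1}$, and then prove the doubling bound by reducing to a five-variable function $F(a,b,e,n,i)$ and carrying out a case analysis. Your binomial summation is a slightly cleaner way of organizing the same count the paper obtains by pairing $\pi_i+\pi_{n-i}\geq 3\binom{n}{i}$; and your description of the small-$n$ fallback (direct verification on the finite or low-parameter families where the doubling bound fails) matches exactly how the paper handles $n\in\{6,7,8\}$ (36 exceptional degree sequences checked by computer) and $n\in\{3,4,5\}$ (explicit polynomial inequalities).
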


\begin{Theorem}\label{FirstHalfAreDoubleWithCases}
  Let $D$ be a degree sequence of length $n+1$ with $d_1\geq 2$ and
  $\reg(D)\leq 2d_1 - 2$.
  
  \begin{itemize} \item If $n \geq 9$ then for each
    $1\leq i\leq \lceil n/2\rceil$,
  $$\pi_i(D)\geq 2\textstyle{n \choose i}.$$

\item If $n\in\{6,7,8\}$, the same conclusion holds unless
  \begin{itemize} \renewcommand{\labelitemii}{$\circ$}
  \item $d_1 = 2$ and $\reg(D) = 2$ or
  \item $d_1 = 3$ and $\reg(D) = 3.$
  \end{itemize}
\end{itemize}
 
\end{Theorem}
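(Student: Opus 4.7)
The plan is to reduce the desired bound $\pi_i(D) \geq 2{n \choose i}$ to showing that a certain rational function $F(a,b,e,n,i)$ satisfies $F \geq 2$ on an appropriate parameter range, and then to carry out a careful case analysis on $F$. This follows the general outline used by Erman to prove $\pi_i(D) \geq {n \choose i}$ (equivalently $F \geq 1$), but strengthens it so as to gain an extra factor of $2$ --- a margin that is genuinely tight when $n$ is small.

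First I would set $a = d_1$ and introduce a parameter $e$ capturing the slack in the regularity bound, so that $d_n = n + 2a - 2 - e$ with $e \geq 0$. The Herzog-K\"uhl formula expresses $\pi_i(D)$ as a rational function of the internal degrees $d_2, \ldots, d_{n-1}$, and by studying its behavior in each $d_j$ one can identify the admissible sequences that minimize $\pi_i(D)/{n \choose i}$. These minimizers should turn out to be degree sequences which are packed as tightly as possible either against the left endpoint $a$ or against the right endpoint $d_n$, with a single "splitting index" $b$ recording where the jump from close-packed on the left to close-packed on the right occurs. Evaluating $\pi_i/{n \choose i}$ on these extremal sequences yields the five-variable function $F(a, b, e, n, i)$ alluded to in the introduction, and Theorem \ref{FirstHalfAreDoubleWithCases} reduces to showing $F(a,b,e,n,i) \geq 2$ across the appropriate ranges.

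Next I would bound $F$ from below. The idea is to fix $i$ and study $F$ as a function of $(a, b, e, n)$. One expects to show that $F$ is essentially nondecreasing in $n$ once the other parameters are appropriately scaled, so that an induction on the codimension reduces the problem to finitely many base cases. Within each base case, one fixes $n$ and $i$ and examines $F$ on the finite grid of admissible $(a,b,e)$, or analyzes partial derivatives piecewise to locate the minimum. For $n \geq 9$ and $1 \leq i \leq \lceil n/2 \rceil$, these estimates should combine to give $F \geq 2$ in every case. For $n \in \{6,7,8\}$, the same analysis succeeds except for two exceptional sub-families --- $(a, \reg(D)) = (2,2)$ and $(3,3)$ --- in which the minimum of $F$ dips just below $2$; these are therefore excluded in the statement, and the remaining small-$n$ sub-cases are handled by direct verification.

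The main obstacle is the failure of uniform monotonicity for $F$: as emphasized in the introduction, $F$ is not increasing in any single one of its variables, so its minimum does not sit at an obvious corner of the admissible region. This forces the proof to split into many sub-cases based on the relative sizes of $a$, $b$, $e$, $n$, and $i$, and to propagate these sub-cases through the inductive step in $n$. Moreover, because the target inequality $F \geq 2$ is tight --- it genuinely fails for the exceptional small-$n$ pairs noted above --- there is essentially no slack, and every intermediate estimate must be carried out sharply rather than with crude bounds. This combinatorial proliferation of sub-cases, rather than any single difficult computation, is the technical heart of the argument and is what the flowchart referenced in the introduction is designed to organize.
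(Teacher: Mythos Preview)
Your high-level outline matches the paper's strategy: reduce $\pi_i(D)/\binom{n}{i}$ to a function $F$ of five variables by passing to an extremal degree sequence, then establish $F\geq 2$ through a careful case analysis organized around partial monotonicity. But the specific reduction you describe is not the one that works, and this is where the proof actually lives.

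You parametrize with $e$ as the slack in the regularity bound (so $d_n = n + 2a - 2 - e$) and $b$ as ``a single splitting index recording where the jump occurs.'' This misidentifies the extremal sequence. When minimizing $\pi_i(D) = \prod_{j\neq i} d_j/|d_j - d_i|$, the entry $d_i$ is held fixed; entries with $j<i$ are pushed down toward $a$ and entries with $j>i$ are pushed up toward $d_n$, producing a sequence with \emph{two} jumps flanking position $i$, not one jump at a free index. The paper's $b$ and $e$ are the sizes of these two jumps (so $b = d_i - a - i + 1$ and $e = d_n - d_i - n + i$), and crucially they depend on $i$ even though $b+e = \reg(D) - a + 1$ does not. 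Your single-jump picture loses the free position of $d_i$ and would not yield the correct $F$.

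Second, your assertion that $F$ is not increasing in any single variable overstates the difficulty and obscures the actual structure of the argument. In fact $F$ is increasing in $a$ unconditionally, which immediately lets one set $a = b+e+1$; it is increasing in $n$ once $n \geq 2i-1$ (this is where the regularity hypothesis $\reg D \leq 2a-2$ is used, and the bound is sharp), which lets one set $n = 2i-1$ and pass to a three-variable function $G(b,e,i)$; and $G$ is then increasing in $i$. These three monotonicity lemmas are the engine of the proof --- they reduce everything to checking $G(b,e,2)\geq 2$, which holds whenever $\max(b,e)\geq 2$, plus a short list of boundary computations for $b+e\in\{1,2\}$ and for $i=1$ (where $b=0$ forces a separate treatment). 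The threshold $n\geq 9$ emerges precisely from the $i=1$ analysis: $G^1(1,n) = F(2,0,1,n,1)\geq 2$ iff $n\geq 9$, and the finitely many failures for $n\in\{6,7,8\}$ with $b+e=1$, $a=b+e+1$ are exactly the excluded cases $(d_1,\reg D)\in\{(2,2),(3,3)\}$. Your plan of ``induction on the codimension'' and ``partial derivatives piecewise'' is too vague to substitute for these concrete reductions.
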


\begin{Remark}\label{The 36 Special Cases}
  When $n\in\{6,7,8\}$ there are only 36 degree sequences satisfying
  the regularity hypothesis but to which Theorem
  \ref{FirstHalfAreDoubleWithCases} does not apply.  The pure diagrams
  are those that are subdiagrams of one of the following diagrams:
$$\begin{array}{c|ccccccccc}
    & 0& 1& 2& 3& & & &  & n  \\
    \hline
    0 &\star&-&-&-&-&-&-&-&- \\
    1 &-& \star &\star &\star& \cdots &\cdots &\star&\star& -\\
    2 &-&-  &\star &\star&\cdots &\cdots&\star&\star&\star \\
    &&&&&&&&& \\
  \end{array}  \ \ \ \ \ , \ \ \ \ \ 
  \begin{array}{c|ccccccccc}
    & 0& 1& 2& 3& & & &  & n  \\
    \hline
    0 &\star&-&-&-&-&-&-&-&- \\
    1 &-&-&-&-&-&-&-&-&- \\
    2 &-& \star &\star &\star& \cdots &\cdots &\star&\star& -\\
    3 &-&-  &\star &\star&\cdots &\cdots&\star&\star&\star \\
  \end{array}.$$

\end{Remark}

The content of Theorems \ref{PureDiagramBound} and
\ref{FirstHalfAreDoubleWithCases} is purely numerical. Their
connection to our main theorems on betti numbers is achieved via the
beautiful results of Boij-S\"oderberg Theory, developed in
\cite{ES,BoijSoed}.  This theory shows that the betti diagram of an
arbitrary finite length module can be written as a finite rational
linear combination of pure diagrams.

Given a module $M$, its betti numbers $\beta_{ij}(M)$ are often
arranged into a \textbf{betti-diagram} -- thought of as a matrix
(typically with the convention that $\beta_{i,i+j}(M)$ is in the $i$th
column and the $j$th row).  With this convention the regularity of $M$
is equal to the index of the bottom row in the diagram.
If $D$ is a degree sequence of length $n+1$ then we define $B(D)$ to
be the betti diagram with entry $\pi_i(D)$ in column $i$ and row
$d_i+i$.  By the Herzog-K\"uhl equations
\eqref{TheHerzogKuhlEquations}, if $M$ is a pure module with degree
sequence $D$ then the betti diagram of $M$ will be a scalar multiple
of $B(D)$.

%
\begin{Example}
  We associate to the degree sequence $D=\{0,2,4,5\}$ the following
  diagrams:
$$
\begin{array}{c|cccc}
  & &&&\\
  \hline 
  & \star & - & -  & - \\ 
  & - & \star & -  & -   \\ 
  & - & -  & \star & \star  \\ 
\end{array}, \ \  
\begin{array}{c|cccc}
  B(D) & &&&\\
  \hline 
       & 1 & - & -  & - \\ 
       & - & \frac{10}{3} & - & - 
  \\ & - & - & 5 & \frac{8}{3}  \\ 
\end{array}.
$$
We use stars to emphasize that we care about the positions of the
nonzero entries in the diagram, then use $B(D)$ to denote the diagram
of numbers $\pi_i(D)$.
\end{Example}

Given two diagrams $B$ and $B'$ we say that $B'$ is a {\bf
  sub-diagram} of $B$ if for each nonzero entry of $B'$, the
corresponding entry in $B$ is also nonzero.  If $B$ is the betti
diagram of a finitely generated module then there are a finite number
of degree sequences $D$ such that $B(D)$ is a subdiagram of $B$.  We
now summarize the results of Eisenbud-Schreyer and (respectively)
Boij-S\"oderberg \cite{ES, BoijSoed} which show that a finite length
module (respectively, one of codimension $c$) can be decomposed as a
sum of pure diagrams.

\begin{Theorem}[Main Theorem of Boij-S\"oderberg Theory \cite{ES,
    BoijSoed}] \label{BSThmES} Let $M$ be a finitely generated
  $S$-module with betti diagram $B$.  Suppose that $\codim M = c$.  If
  $\Omega = \{ B(D) \}$ is the set of all pure sub-diagrams of $B$
  having between $c+1$ and $n+1$ columns (indexed by their degree
  sequences $D$ with lengths between $c+1$ and $n+1$) then there exist
  non-negative rational numbers $\lambda_D$ such that
$$B = \sum_{B(D) \in \Omega} \lambda_D B(D).$$ 
In particular, this implies that $\beta_0 = \sum \lambda_D$ and more
generally, $\displaystyle{\beta_i(M) = \sum \lambda_D \pi_i(D).}$
\end{Theorem}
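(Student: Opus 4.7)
The plan is to prove this theorem via the classical two-step strategy of Boij-Söderberg theory: a combinatorial greedy decomposition algorithm, combined with a non-trivial characterization of the cone of Betti diagrams arising from actual modules.

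First, I would introduce the componentwise partial order $\preceq$ on degree sequences of a fixed length, declaring $D \preceq D'$ iff $d_i \leq d'_i$ for every $i$. Given the Betti diagram $B$ of $M$, the set of degree sequences $D$ (of length between $c+1$ and $n+1$) such that $B(D)$ is a subdiagram of $B$ is finite and non-empty. I would pick a $\preceq$-minimal element $D$ of this set, and choose $\lambda_D \in \QQ_{\geq 0}$ as large as possible subject to the constraint that $B' := B - \lambda_D B(D)$ still has non-negative entries. The maximality of $\lambda_D$ forces at least one entry at position $(i, d_i+i)$ to vanish in $B'$; combined with the minimality of $D$, this guarantees that every $\preceq$-minimal degree sequence of $B'$ lies strictly above $D$. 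Iterating and using that only finitely many degree sequences lie in the relevant range, the algorithm terminates in finitely many steps and produces a decomposition $B = \sum_D \lambda_D B(D)$.

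The main obstacle is showing that at every step the diagram $B'$ obtained after subtraction actually remains in the cone of Betti diagrams, so that the algorithm can be iterated and every $\lambda_D$ is genuinely non-negative rational. This reduces to proving that the cone of virtual Betti tables is exactly the rational polyhedral cone spanned by the $B(D)$. For this I would follow Eisenbud-Schreyer in two stages. \emph{(i) Existence of pure modules:} for every admissible degree sequence $D$ there exists a module whose resolution is pure of type $D$, so that $B(D)$ is a positive rational multiple of a genuine Betti diagram; in characteristic zero this comes from the equivariant Schur-functor construction of Eisenbud-Fløystad-Weyman, and in arbitrary characteristic from a pushforward of a Koszul-type complex on a projective bundle. \emph{(ii) Facet description:} for each ``supernatural'' coherent sheaf $\mathcal{E}$ on projective space $\PP^{n-1}$, one constructs a linear functional on Betti diagrams -- a weighted pairing with the cohomology table of $\mathcal{E}$ -- that is non-negative on every genuine Betti diagram. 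Varying $\mathcal{E}$ over enough root sequences supplies sufficiently many separating hyperplanes to cut out the cone spanned by the pure diagrams exactly, which is precisely what is needed to guarantee that the greedy subtraction never leaves the cone.

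Finally, I would check the length constraints on the decomposition. The upper bound comes from the global-dimension estimate $\projdim M \leq n$, so no $B(D)$ with $D$ of length greater than $n+1$ can appear. The lower bound $c+1$ is more delicate: it reflects that $\codim M = c$ forces the last nonzero column of $B$ to be at index $\geq c$, so every $\preceq$-minimal degree sequence associated to $B$ (and to each intermediate $B'$) has length at least $c+1$. The most delicate ingredient is by far the facet description in stage (ii); once that is in place, the greedy algorithm and the length bookkeeping are essentially routine.
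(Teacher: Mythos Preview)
The paper does not prove this theorem at all: it is stated as a citation of known results from \cite{ES,BoijSoed}, with no argument given. There is therefore nothing to compare your proposal against in the paper itself.

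That said, your outline is essentially the standard proof from the cited references. The greedy subtraction algorithm, the existence of pure resolutions (via Eisenbud--Fl\o ystad--Weyman in characteristic zero and Eisenbud--Schreyer in general), and the facet description via pairing with cohomology tables of supernatural sheaves are exactly the ingredients of the Eisenbud--Schreyer proof for the Cohen--Macaulay case. One small point worth sharpening: your justification for the lower length bound $c+1$ is a bit glib. The extension from finite-length (or Cohen--Macaulay) modules to arbitrary modules of codimension $c$ is precisely the content of the second Boij--S\"oderberg paper, and the fact that the intermediate diagrams $B'$ produced by the greedy algorithm continue to admit decompositions using only degree sequences of length at least $c+1$ requires their refined description of the cone for non-Cohen--Macaulay modules, not just the observation that $\projdim M \geq c$. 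But this is a matter of citing the right extension rather than a gap in the strategy.
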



\section{Reduction to Theorem \ref{FirstHalfAreDoubleWithCases}}
In this section we explain how to deduce our main theorems from their
numerical versions stated in Section \ref{BS-Basics}.  We will then
assume Theorem \ref{FirstHalfAreDoubleWithCases} and use it to prove
Theorem \ref{PureDiagramBound}.  For convenience, all four theorems
are restated in the diagram below.

\FR{
  \begin{multicols}{2}
    \begin{center}
      \textsc{Main Theorems on Betti Numbers}
    \end{center}
    \textbf{Theorem \ref{MainTheorem}.} Let $M$ be a graded $S$-module
    of codimension $c\geq 3$ generated in degree $0$ and let $a\geq 2$
    be the minimal degree of a first syzygy of $M$.  If
    $\reg(M) \leq 2a-2$ then $\beta(M) \geq \beta_0(M)(2^c+2^{c-1}).$
    \vspace{.2cm}

    \textbf{Theorem \ref{corollaryIntro}.} Let $M$ be a graded
    $S$-module of codimension $c\geq 9$ generated in degree $0$ and
    let $a\geq 2$ be the minimal degree of a first syzygy of $M$.  If
    $\reg(M) \leq 2a-2$ then for each $1\leq i\leq \lceil c/2\rceil$,
    $\beta_i(M)\geq 2{c \choose i}$.  \columnbreak

  \begin{center}
    \textsc{Main Numerical Results}
  \end{center}
  
  \textbf{Theorem \ref{PureDiagramBound}.} Suppose that $n\geq
  3$, 
  and $D$ is a degree sequence of length $n+1$, and $d_1\geq 2$
  satisfying $\reg(D) \leq 2d_1 - 2$.  Then
  $\sum \pi_i(D) \geq 2^n + 2^{n-1}.$ \vspace{.2cm}
  
  \textbf{Theorem \ref{FirstHalfAreDoubleWithCases}.} If $d_1\geq 2$
  and $\reg(D)\leq 2d_1-2$ and $n\geq 9$ then for each
  $1\leq i\leq \lceil n/2\rceil$,
  $$\textstyle{\pi_i(D)\geq 2{n \choose i}.}$$  If $n\in\{6,7,8\}$ and either
  $d_1\geq 3$ or $\reg(D) - d_1 +1 \neq 1$, then the same conclusion
  holds.

\end{multicols}
}

The theorems on the left follow more or less immediately from the
corresponding theorems on the right via Boij-S\"oderberg theory.  With
the exception of a small number of special cases when $n < 9$, Theorem
\ref{PureDiagramBound} will follow from Theorem
\ref{FirstHalfAreDoubleWithCases}, the proof of which will be
postponed until Section \ref{ProofOfFirstHalfAreDoubleWithCases}.

\subsection{Proofs of Theorems \ref{MainTheorem} and
  \ref{corollaryIntro}}
\begin{proof}[Proof of Theorem \ref{MainTheorem}]
  Suppose $M$ is generated in degree zero, and $a\geq 2$ is the
  minimal degree of a first syzygy of $M$.  By Theorem \ref{BSThmES}
  there exist nonnegative rational numbers $a_D$ such that
  \begin{equation} \label{BSSum} \beta_i(M) = \sum_D a_D \pi_i(D)
  \end{equation}
  where $D$ runs over all degree sequences of length
  $\ell(D) \in [c+1,n+1]$ whose betti diagrams, $B(D)$, are
  sub-diagrams of $B(M)$.  Let $D$ be such a degree sequence. Then
  $d_1\geq a$ and as we have assumed $\reg M \leq 2a-2$, it follows
  that
  $$\reg(D) = d_{\ell(D)} - \ell(D) \leq \reg M \leq 2a - 2 \leq 2d_1 - 2.$$
  Hence we can apply Theorem \ref{PureDiagramBound}.  Since every
  degree sequence appearing in the sum has length at least $c+1$,
  Theorem \ref{PureDiagramBound} implies that
  $\sum_i \pi_i(D) \geq 2^{c} + 2^{c-1}$.  Hence we have
  \begin{equation*}
    \beta(M)
    = \sum_{i=0}^n \beta_i(M)
    = \sum_D a_D \left(\sum_{i=0}^n  \pi_i(D)\right)
    \geq \sum_D a_D  (2^c + 2^{c-1})
    = \beta_0(M)(2^c + 2^{c-1}).
    \qedhere
  \end{equation*}
\end{proof}

\begin{proof}[Proof of Theorem \ref{corollaryIntro}]
  The scaffolding is exactly the same as in the previous proof.  If
  $c\geq 9$ then equation \eqref{BSSum} and Theorem
  \ref{FirstHalfAreDoubleWithCases} imply for
  $i \in \{1, \ldots, \lceil c/2 \rceil\}$
  \begin{equation*}
    \beta_i(M)
    = \sum_D a_D \pi_i(D)
    \geq \sum_D a_D \ 2{ c \choose i}
    =\beta_0(M){ c \choose i}. \qedhere
  \end{equation*}
\end{proof}

\subsection{Proof of Theorem \ref{PureDiagramBound}}
\begin{proof}[Proof of Theorem \ref{PureDiagramBound} when Theorem
  \ref{FirstHalfAreDoubleWithCases} holds]
  Suppose that $D$ is a degree sequence satisfying the hypotheses of
  Theorem \ref{FirstHalfAreDoubleWithCases}.  Then let us add up all
  of the $\pi_i$ in pairs.  If $n$ is odd, there are an even number of
  $\pi_i$'s.  When summing, we can group them in pairs
  $\pi_i + \pi_{n-i}$.  Now $\pi_0 + \pi_n \geq 2$ since $\pi_0 = 1$
  and $\pi_n \geq 1$ by Erman's Theorem.  In all other pairs, we
  combine Theorem \ref{FirstHalfAreDoubleWithCases} with Erman's
  result, and conclude that $\pi_i + \pi_{n-i} \geq 3{n \choose i}$.
  Moreover, since the assumption on indices in Theorem
  \ref{FirstHalfAreDoubleWithCases} includes $i=\lceil n/2\rceil$, the
  last pair is at least $4{n \choose (n-1)/2}$.  Thus
  \begin{equation*}
    \sum \pi_i
    \geq 2 + 3{n \choose 1} + \cdots+ 3{n \choose {\frac{n-1}{2}}} + {n \choose \frac{n-1}{2}}
    \geq 2 + \frac{3}{2}(2^n - 2) + {n \choose \frac{n-1}{2}}
    \geq 2^n + 2^{n-1}.
  \end{equation*}

  When $n$ is even, we proceed by pairing terms exactly as before.  In
  this case however, there is a central term in the sum (the term
  $\pi_{n/2}$) which has no companion.  We thus have:
  \begin{equation*}
    \sum \pi_i
    \geq 2 + 3{n \choose 1} + \cdots 3{n \choose {\frac{n-2}{2}}} + 2{ n \choose \frac{n}{2}}
    \geq 2 + \frac{3}{2}\left(2^n - 2 - {n \choose {\frac{n}{2}}}\right) + 2{n \choose \frac{n}{2}}
    \geq 2^n + 2^{n-1}.\qedhere
  \end{equation*}
\end{proof}

\begin{proof}[Proof of Theorem \ref{PureDiagramBound} for
  $n\in\{6,7,8\}$]
  By Remark \ref{The 36 Special Cases} there are only 36 degree
  sequences $D$ that satisfy $d_1 \geq 2$ and $\reg(D) \leq 2d_1 -2$
  for which Theorem \ref{FirstHalfAreDoubleWithCases} does not apply.
  Using Macaulay2 we checked that the sum of $\pi_i(D)$ in each of
  these cases is at least $2^n + 2^{n-1}$.  The reader is directed to
  the file \texttt{computations.m2} included in our arXiv posting for
  explicit code that can be used to verify this statement.
\end{proof}

\begin{proof}[Proof of Theorem \ref{PureDiagramBound} for
  $n\in\{3,4,5\}$]
  For each value of $n$, we will verify that
  $\sum \pi_i\geq 1.5\cdot 2^n$ via a direct computation.  Suppose
  first that $n=3$ so that the degree sequence $D=\{0,d_1,d_2,d_3\}$.
  We change notation to emphasize the nonlinear parts of $D$ by
  instead writing it as $D=\{0,a,a+x+1,a+x+y+2\}$, where $x,y\geq 0$
  can easily be computed from the $d_i$'s.  We may assume $a\geq 2$
  and our regularity assumption says $ x + y + 1 \leq a$.  We want to
  prove that $\sum_{i=0}^3\pi_i(D)\geq 12$.  Using the Herzog-K\"{u}hl
  equations, this is equivalent to the polynomial inequality
  \begin{equation*}
    a^2+ax+ay+2a-5xy-5x-5y-5\geq 0.
  \end{equation*}
  If $x=y=0$ so that the resolution is linear, then the assumption
  that $a\geq 2$ implies the inequality holds.  On the other hand if
  the resolution is not linear, we observe that the left hand side is
  clearly an increasing function of $a$, so it suffices to consider
  the case that $a=x+y+1$, whereby the inequality becomes
  \begin{equation*}
    0\leq 2x^2+2y^2-2xy-2=(x-y)^2+x^2+y^2+xy-2
  \end{equation*}
  Evidently, each of these terms is positive at least two are nonzero
  (since $x$ and $y$ are not both $0$), so the inequality holds as
  desired.

  Repeating an identical analysis with $n=4$ (so that
  $D=\{0,a,a+x+1, a+x+y+2,a+x+y+z+3\}$) again results in a polynomial
  inequality for which the left hand side is an increasing function of
  $a$.  After considering the linear case separately, we set
  $a=x+y+z+1$, and are left to verify the polynomial inequality
  \begin{multline*}
    2x^4+5x^3y+4x^2y^2+xy^3+7x^3z+9x^2yz+4xy^2z+2y^3z+9x^2z^2+8xyz^2\\
    +5y^2z^2+5xz^3+4yz^3+z^4+12x^3+19x^2y+10xy^2+3y^3+27x^2z+15xyz+12y^2z\\
    +23xz^2+17yz^2+8z^3+22x^2+13xy+9y^2+23xz+12yz+17z^2+6x+4z-6\geq 0
  \end{multline*}
  This will hold provided not all of $x,y,z=0$.

  The proof strategy for $n=5$ is exactly the same and begins by
  setting $D=\{0,a,a+x+1, a+x+y+2,a+x+y+z+3, a+x+y+z+w+4\}$, then
  using the Herzog-K\"uhl equations to get a polynomial inequality.
  The expression thus obtained is now too complicated to be analyzed
  by hand, though it's still very manageable for a machine.  By
  writing it as a polynomial in $a$, one can verify that all of the
  coefficients (besides the constant term) are positive and therefore
  that left hand side is increasing as a function of $a$.  Again
  substituting $a=x+y+z+w+1$, one obtains an expression and factors it
  (with a computer) to arrive at an inequality in which all terms on
  the left hand side are positive except for the constant term.  A
  simple computer verification shows that the inequality it holds for
  all $x,y,z,w\geq 0$.
\end{proof}

\begin{Remark}
  The file \texttt{computations.m2} included in our arXiv posting
  contains code to verify the numerical statements in this paper.
\end{Remark}

\section{Proof of Theorem \ref{FirstHalfAreDoubleWithCases}}
\label{ProofOfFirstHalfAreDoubleWithCases}
In this section we will prove Theorem
\ref{FirstHalfAreDoubleWithCases}, which is the last ingredient needed
to complete the proofs of our main results.  We endeavor to show that
for suitable $D$ and $i$, we have
\begin{equation*}
  \textstyle\pi_i(D) \geq 2 {n \choose i}.
\end{equation*}

Thus it is natural to study the function
$(D,i)\mapsto\pi_i(D)/{n \choose i}$.  Of course this function depends
on $n+1$ parameters, so a simplification is required before a
reasonable analysis can be performed.  We will define a function $F$
depending on five parameters such that
\begin{equation*}
  \frac{\pi_i(D)}{{n \choose i}}\geq F(a,b,e,n,i).
\end{equation*}

\noindent {\bf Main Notation:} Let $D\colon 0< d_1< \cdots < d_n$ and
set $a = d_1$.  Given $i\geq 1$, we define a modification of $D$ as
follows:
\begin{equation}\label{Di-definition}
  D^i
  = \{0, a, a+ 1, a+2, \ldots, a+(i-2), d_i, d_n - (n-i-1),  d_n - (n-i-2), \ldots d_n\}
\end{equation}

\noindent 
Considering now a degree sequence,
$D^i$ 
we will focus our attention on its nonlinear parts.
\begin{equation*}
  b = d_i - a - i + 1, \quad e = d_n -d_i - n + i.
\end{equation*}
Notice then that we have
\begin{align*}
  d_n &= a  + b + e + n - 1, \\
  \reg(D) &=  a + b + e - 1.
\end{align*}

The reader is urged to ignore these equations and press on to the
example that follows, which should clarify the idea (and resolve the
ambiguity when $i=1$).
\begin{Example} \label{ex:Di} Suppose that $i=5$ and
  $D = \{0, 3,5,6,8,\boxed{10}, 12, 15, 16, 19, 20\}$ then the betti
  diagrams for $D$ and $D^5$ would be formatted as shown
  \begin{equation*}
    \begin{array}{c|ccccccccccc}
      D \\ 
      \hline
      & \star & -     & -     & -     & -     & -      & -     & -     & -     & -     & - \\
      &       & -     & -     & -     & -     & -      & -     & -     & -     & -     & - \\ 
      &       & \star & -     & -     & -     & -      & -     & -     & -     & -     & - \\ 
      &       & -     & \star & \star & -     & -      & -     & -     & -     & -     & - \\ 
      &       & -     & -     & -     & \star & -      & -     & -     & -     & -     & - \\ 
      &       & -     & -     & -     & -     & \boxed{\star}& - & -   & -     & -     & - \\ 
      &       & -     & -     & -     & -     & -      & \star & -     & -     & -     & - \\ 
      &       & -     & -     & -     & -     & -      & -     & -     & -     & -     & - \\ 
      &       & -     & -     & -     & -     & -      & -     & \star & \star & -     & - \\ 
      &       & -     & -     & -     & -     & -      & -     & -     & -     & -     & - \\ 
      &       & -     & -     & -     & -     & -      & -     & -     & -     & \star & \star \\ 
    \end{array}, \ \
    \begin{array}{c|ccccccccccc}
      D^5 \\ 
      \hline 
      & \star & -    & -    & -     &-      & -      & -      &-      &-     & -      & - \\ 
      &       & -    & -    & -     &-      & -      & -      &-      &-     & -      & - \\ 
      &       & \star& \star& \star &\star  & -      & -      &-      &-     & -      & - \\ 
      &       & -    & -    & -     &-      & -      & -      &-      &-     & -      & - \\ 
      &       & -    & -    & -     &-      & -      & -      &-      &-     & -      & - \\ 
      &       & -    & -    & -     &-      & \boxed{\star}& -&-      &-     & -      & - \\ 
      &       & -    & -    & -     &-      & -      & -      &-      &-     & -      & - \\ 
      &       & -    & -    & -     &-      & -      & -      &-      &-     & -      & - \\ 
      &       & -    & -    & -     &-      & -      & -      &-      &-     & -      & - \\ 
      &       & -    & -    & -     &-      & -      & -      &-      &-     & -      & - \\ 
      &       & -    & -    & -     &-      & -      & \star  &\star  &\star &\star   & \star \\ 
    \end{array}
  \end{equation*}
  Visually, we have kept $d_i$ in the same place, but have shifted all
  of the earlier numbers to the top of the diagram and all of the
  later ones to the bottom. Notice that in this example $a = 3$.  In
  the right-hand diagram there are visible jumps of size $b=3$ and
  $e = 5$ on either side of the $\boxed{\star}$ in position
  $i$.\end{Example}


\begin{Lemma}
  If $D\colon 0 < ( d_1 = a) < d_2 < \cdots <d_n$ is a degree sequence
  then for all $i\geq 1$
  \begin{equation*}
    \pi_i(D) \geq \pi_i(D^i)
  \end{equation*}
\end{Lemma}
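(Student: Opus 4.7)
The plan is to pass from $D$ to $D^i$ by a sequence of single-coordinate modifications, each of which weakly decreases $\pi_i$. Using the convention $d_0 = 0$, I would begin by rewriting
\[
\pi_i(D) = \frac{d_1 d_2 \cdots d_n}{d_i \cdot \prod_{1 \leq j < i}(d_i - d_j) \cdot \prod_{i < j \leq n}(d_j - d_i)},
\]
so that every factor in the denominator is manifestly positive. Then, holding all entries fixed except $d_k$, the dependence of $\pi_i$ on $d_k$ reduces to a positive multiple of $\tfrac{d_k}{d_i - d_k}$ when $1 \leq k < i$, and to a positive multiple of $\tfrac{d_k}{d_k - d_i}$ when $i < k \leq n$. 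A one-line calculation ($\tfrac{d}{dx}\tfrac{x}{d_i - x} = \tfrac{d_i}{(d_i - x)^2} > 0$ on $(0, d_i)$ and $\tfrac{d}{dx}\tfrac{x}{x - d_i} = \tfrac{-d_i}{(x - d_i)^2} < 0$ on $(d_i, \infty)$) then shows that $\pi_i$ is strictly increasing in $d_k$ for $k < i$ and strictly decreasing in $d_k$ for $k > i$.

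Next I would realize the transition $D \rightsquigarrow D^i$ by modifying one coordinate at a time. Since $d_1 = a$, $d_i$, and $d_n$ agree in both sequences, the only coordinates to adjust are $d_2, \ldots, d_{i-1}$ and $d_{i+1}, \ldots, d_{n-1}$. Processing $k = 2, 3, \ldots, i-1$ in that order, I would decrease each $d_k$ from its original value down to $a + k - 1$; this is a decrease because strict monotonicity starting from $d_1 = a$ forces $d_k \geq a + k - 1$, and the already-modified coordinates $a + 1 < a + 2 < \cdots < a + k - 2$ together with the still-untouched $d_{k+1} \geq d_k^{\text{orig}} + 1 > a + k - 1$ preserve strict increase at every intermediate step. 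Symmetrically, processing $k = n-1, n-2, \ldots, i+1$ in reverse order, I would increase each $d_k$ to $d_n - (n - k)$, again maintaining strict increase.

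By the monotonicity established in the first step, each of these single-coordinate modifications weakly decreases $\pi_i$, and composing them yields $\pi_i(D^i) \leq \pi_i(D)$. The main obstacle, such as it is, is the bookkeeping of step two: ensuring that every intermediate configuration remains a strictly increasing sequence (and in particular that the denominator factors remain the correct sign) so that the monotonicity formulas apply verbatim. This is routine given the chosen processing order, but merits explicit verification because the argument would break down if one attempted to modify the coordinates in a bad order.
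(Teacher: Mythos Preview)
Your proposal is correct and uses the same core idea as the paper: the factor $\tfrac{d_j}{|d_j-d_i|}$ is monotone in $d_j$ (increasing for $j<i$, decreasing for $j>i$), so pushing the early coordinates down and the late coordinates up can only lower $\pi_i$. The paper's execution is slightly more direct: rather than moving one coordinate at a time and tracking that each intermediate configuration is still a valid degree sequence, it simply writes $\pi_i(D)=\prod_{j\neq i}\tfrac{d_j}{|d_j-d_i|}$ and compares this product term-by-term with the corresponding product for $D^i$, observing that each factor satisfies the needed inequality. This sidesteps entirely the bookkeeping you flag as the ``main obstacle,'' since no intermediate sequences ever appear.
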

\begin{proof}
  We prove a slightly more general statement.  Let $i\geq 1$ and
  suppose that $D' = \{0,d'_1, \ldots, d'_n\}$ is a degree sequence
  with $d'_i = d_i$.  Then
  \begin{equation*}
    \pi_i(D)
    = \prod_{j\neq i} \frac{d_j}{|d_j-d_i|}
    ,  \ \
    \pi_i(D')
    = \prod_{j\neq i} \frac{d'_j}{|d'_j-d_i|}.
  \end{equation*}
  As all the terms in the product are positive, a sufficient condition
  for $\pi_i(D) \geq \pi_i(D')$ is that
  \begin{equation*}
    \frac{d_j}{|d_j-d_i|} \geq \frac{d_j'}{|d'_j-d_i|}
  \end{equation*}
  for all $j\neq i$.  If $j < i$ then this is equivalent to requiring
  \begin{equation*}
    \frac{d_j}{d_i-d_j} \geq \frac{d'_j}{d_i-d'_j}
    \quad \Longleftrightarrow \quad
    d_j\geq d_j'.
  \end{equation*}
  Conversely, if $j > i$ then the inequality is $d_j\leq d'_j$.  To
  conclude, we simply observe that all of these inequalities hold for
  $D' = D^i$, whence the result follows.
\end{proof}

\noindent We now compute
\begin{align*}
  \pi_i(D^i)
  &= \frac{a(a+1)\cdots(a+(i-2))}{
    (b+1)(b+2)\cdots(b+(i-1))} \frac{(a + b + e + i)\cdots (a + b + e +
    n - 1)}{ (e+1)(e+2)\cdots (e + n - i)}\frac{i!(n-i)!}{n!}{n \choose
    i}\\
  &= \frac{a(a+1)\cdots(a+(i-2))}{
    (b+1)(b+2)\cdots(b+(i-1))} \frac{(n + a + b + e - 1)!e!}  {(a + b +
    e + i - 1)!(n - i + e)!}\frac{i!(n-i)!}{n!}{n \choose i}\\
  &= \frac{(a)\cdots(a+(i-2))}{
    (b+1)\cdots(b+(i-1))} \frac{(n+1) \cdots (n + a + b + e -
    1)}{(i + 1)\cdots (i + a + b + e - 1)} \frac{e!}  {(n-i + 1) \cdots
    (n - i + e)}{n \choose i}.
\end{align*}
\begin{Definition}\label{def:F}
  We define the function $F=F(a,b,e,n,i)$ as the coefficient of
  ${n \choose i}$ in the above computation.  The domain of $F$ is
  $b\geq 0$, $e\geq 0$, $a\geq 2$, $n\geq 3$, $1\leq i\leq n$,
  \begin{equation*}
    F(a,b,e,n,i) = \frac{(a)\cdots(a+(i-2))}{
      (b+1)\cdots(b+(i-1))} \frac{(n+1) \cdots (n + a + b + e -
      1)}{(i + 1)\cdots (i + a + b + e - 1)} \frac{e!}  {(n-i + 1) \cdots
      ((n - i) + e)}.
  \end{equation*}
  In the sequel we will refer to each of the three fractions in the
  above equation as a grouping.  When $i=1$ there are no terms in the
  first grouping.  Similarly, when $e=0$ there are no terms in the
  third grouping.
\end{Definition}
\noindent Our present goal is to show that $F(a,b,e,n,i)$ is at least
$2$ for a suitable range of inputs (e.g.\ $i\leq \lceil n/2\rceil$).
\begin{Lemma}\label{lem:Fa}
  $F$ is increasing as a function of $a$:
  \begin{equation*}
    F(a,b,e,n,i) \leq F(a+1, b,e,n,i).
  \end{equation*}
\end{Lemma}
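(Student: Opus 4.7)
My plan is a direct computation. Since $F$ is a product of three groupings, and the third grouping $e!/((n-i+1)\cdots(n-i+e))$ has no $a$ in it, I only need to track how the first two groupings change when $a$ is replaced by $a+1$.

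First I would compute the ratios grouping by grouping. In the first grouping, replacing $a$ by $a+1$ shifts $a(a+1)\cdots(a+i-2)$ up to $(a+1)(a+2)\cdots(a+i-1)$, which multiplies the grouping by $(a+i-1)/a$; when $i=1$ this grouping is empty and the factor correctly degenerates to $a/a = 1$. In the second grouping, both the numerator $(n+1)\cdots(n+a+b+e-1)$ and denominator $(i+1)\cdots(i+a+b+e-1)$ acquire one new top factor when $a$ increases by $1$, so this grouping gets multiplied by $(n+a+b+e)/(i+a+b+e)$. Thus
\begin{equation*}
\frac{F(a+1,b,e,n,i)}{F(a,b,e,n,i)} = \frac{(a+i-1)(n+a+b+e)}{a(i+a+b+e)}.
\end{equation*}

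The remaining task is to show this ratio is at least $1$, i.e., $(a+i-1)(n+a+b+e) \geq a(i+a+b+e)$. Expanding both sides and cancelling the common $a^2+ab+ae$ terms reduces the inequality to
\begin{equation*}
a(n-1) + (i-1)(n+b+e) \geq 0,
\end{equation*}
which is manifest from the domain conditions $a\geq 2$, $n\geq 3$, $i\geq 1$, and $b,e\geq 0$ (in fact $a(n-1) \geq 4$ already).

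There is no real obstacle here; the only subtlety is bookkeeping the edge case $i=1$, where the first grouping is empty. The formula for the ratio still holds in that case (the first factor becomes $1$), so a single unified computation suffices. I would write the proof as essentially the display above, followed by the cross-multiplication identity and a one-line observation about positivity.
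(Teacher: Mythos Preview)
Your proof is correct and follows essentially the same approach as the paper's: both track the effect of $a\mapsto a+1$ grouping by grouping and observe that each grouping can only increase. The paper argues slightly more qualitatively (the second grouping picks up an extra factor $\geq 1$, and for $i>1$ each numerator term in the first grouping simply gets larger), whereas you compute the exact ratio and verify the resulting polynomial inequality; the content is the same.
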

\begin{proof}
  If $i=1$, then $F(a+1,b,e,n,i)$ is equal to $F(a,b,e,n,i)$ times an
  additional factor which has the form $(s+n+ a)/(s+i + a)$ for some
  $s\in \mathbb{N}$, which is evidently at least 1. If $i>1$, then in
  addition to this extra factor, the numerators of the terms in the
  first grouping in $F(a+1,b,e,n,i)$ will be larger than the
  corresponding terms on the left hand side of the inequality.
\end{proof}

One might hope that $F$ is an increasing function of $n$.  This is not
the case as can be seen in Figure \ref{DesmosGraph}.  However, note
that in the figure $F$ is increasing for $n \geq 40$.  It is no
coincidence that $40 = 2i$ as the following lemma shows.
\begin{figure}[h!]
  \includegraphics[width=.5\textwidth]{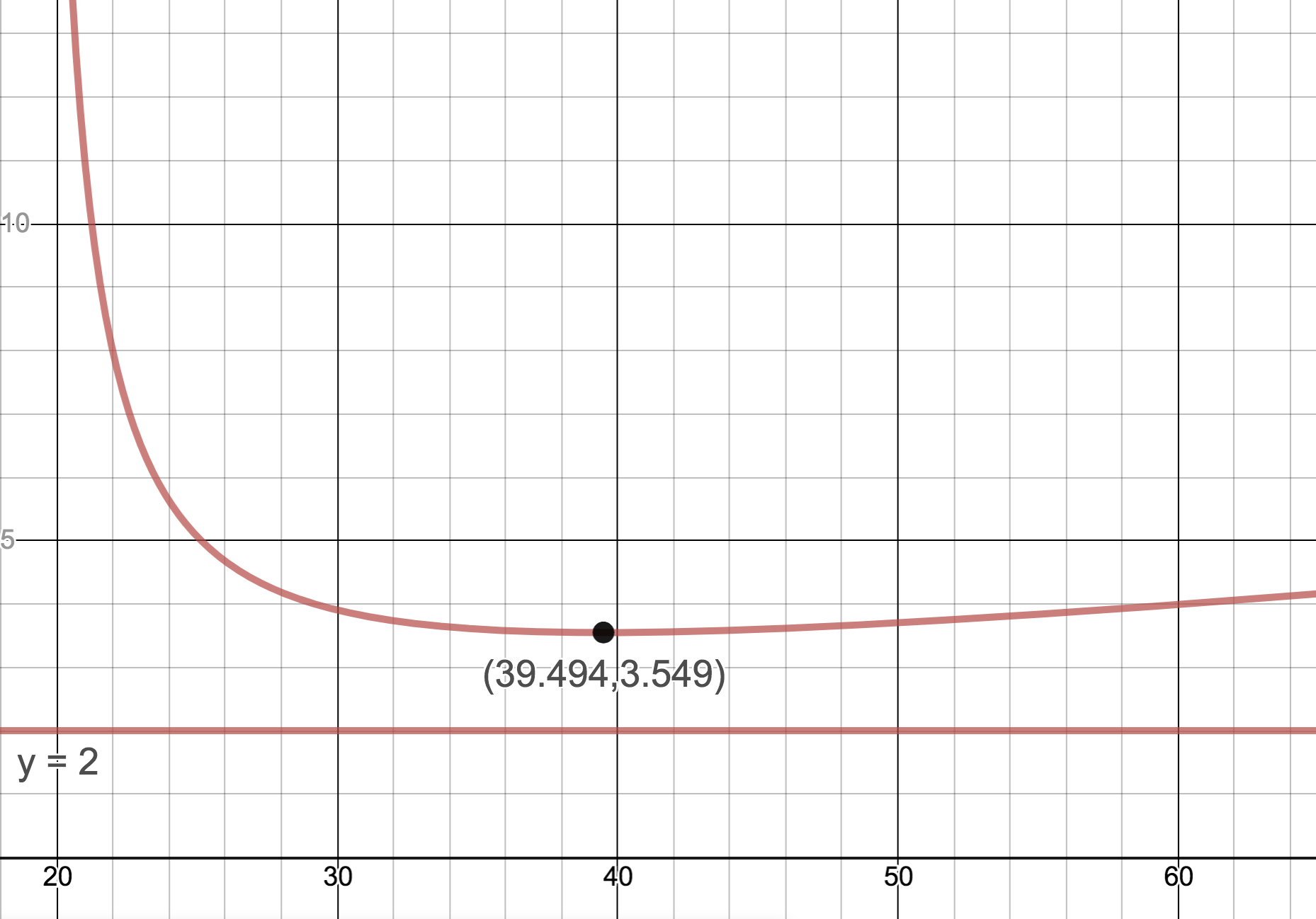}
  \caption{A graph of $F(2,0,1,n,20)$ as a function of $n$.}
  \label{DesmosGraph}
\end{figure}

\begin{Lemma}\label{lem:Fn}
  If $n\geq 2i-1$ and $\reg(D)\leq 2a-2$ then
  \begin{equation*}
    F(a,b,e,n,i) \leq F(a,b,e,n+1, i).
  \end{equation*}
  That is, if $i$ is at most $\frac{n+1}{2}$ then $F$ is an increasing
  function of $n$.
\end{Lemma}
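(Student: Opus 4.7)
My approach will be to directly compute the ratio $F(a,b,e,n+1,i)/F(a,b,e,n,i)$ using the product formula from Definition \ref{def:F}. The first grouping does not depend on $n$, so it contributes trivially. The second grouping gains a factor of $n+a+b+e$ at the top of its numerator and loses $n+1$ at the bottom when $n$ is incremented, multiplying it by $(n+a+b+e)/(n+1)$. The third grouping similarly scales by $(n-i+1)/(n-i+e+1)$, since its denominator gains $n-i+e+1$ at the top and loses $n-i+1$ at the bottom. Thus the claim reduces to the polynomial inequality
\[
(n+a+b+e)(n-i+1) \geq (n+1)(n-i+e+1).
\]

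The next step is to expand both sides and collect terms. The $n^{2}$ contributions cancel; the coefficients of $n$ differ by $a+b-1$, and the constants combine to give $(a+b-1)(1-i) - ie$. After factoring, the inequality rewrites in the clean form
\[
(a+b-1)(n+1-i) \geq ie.
\]
The hypothesis $n\geq 2i-1$ gives $n+1-i\geq i$, and since $a\geq 2$ and $b\geq 0$ we have $a+b-1\geq 1>0$, so the inequality follows as soon as we show $a+b-1 \geq e$.

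Finally, the regularity hypothesis $\reg(D) = a+b+e-1 \leq 2a-2$ rearranges to $b+e\leq a-1$, whence $e\leq a-1-b\leq a+b-1$ using $b\geq 0$, completing the argument. The only step that requires care is the algebraic simplification leading to the form $(a+b-1)(n+1-i)\geq ie$; this is the heart of the proof and the place where the regularity bound $2a-2$ enters in precisely the right way. Degenerate cases are automatic: when $i=1$ the empty first grouping causes no issue, and when $e=0$ the third grouping is $1$, consistent with the ratio formula (both factors reduce to their $e=0$ values).
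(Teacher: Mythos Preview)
Your proof is correct and follows essentially the same approach as the paper: both compute the ratio $F(a,b,e,n+1,i)/F(a,b,e,n,i)$, reduce to the polynomial inequality $(n+a+b+e)(n-i+1)\geq (n+1)(n-i+e+1)$, and then invoke the regularity hypothesis together with $n\geq 2i-1$. Your factorization $(a+b-1)(n+1-i)\geq ie$ is arguably cleaner than the paper's rearrangement into the form $n\geq \frac{(i-1)R+e}{R-e}$ with $R=a+b+e-1$, but the underlying idea is the same.
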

\begin{proof}
  Let $R = \reg(D) = a+b + e - 1$.  Using our assumption on the
  regularity, we have
  \begin{equation*}
    2a-2\geq a+b+e-1
    \quad\Longrightarrow\quad
    a-1\geq b+e.
  \end{equation*}
  This in turn implies
  \begin{equation*}
    R=b+e+a-1\geq 2(b+e)\geq 2e.
  \end{equation*}
  Further if $n \geq 2i-1$ then
  \begin{equation*}
    n \geq (2i-1)\frac{R-e}{R-e}
    = \frac{2iR-R -2ie +e}{R-e}
    = \frac{(i-1)R+e +i(R-2e)}{R-e}
    \geq \frac{(i-1)R +e}{R-e}.
  \end{equation*}
  Finally we compute
  \begin{equation*}
    \frac{F(a,b,e,n+1,i)}{F(a,b,e,n,i)}
    = \frac{(n+a+b+e)(n-i+1)}{(n+1)((n - i) + e + 1)}
  \end{equation*}
  This will be at least $1$ provided
  \begin{equation*}
    (n+a+b+e)(n-i+1)\geq (n+1)((n - i) + e + 1)    
  \end{equation*}
  which is equivalent to:
  \begin{equation*}
    n \geq \frac{(i-1)R+e}{R - e}.
  \end{equation*}
  This is the inequality we have shown above.
\end{proof}
\begin{Remark}
  Notice that Figure \ref{DesmosGraph} shows that we cannot improve
  the bound $n\geq 2i -1$.  Further, note that in this proof we used
  that $\reg(D) \geq 2e$ and that this came from our assumption that
  $\reg(D) \leq 2a-2$.  If we relax that bound, even by one, say to
  $2a-1$ then it will not be true that $F$ is an increasing function
  of $n$.  For instance, consider the following two degree sequences
  (with $a = 2, b = 0, e = 2, i = 3, R = 3$):
  \begin{align*}
    \{0,2,3,4,7,8\}\ ,& \ \{0,2,3,4,7,8,9\}\\
    F(a,b,e,5,i) >& \  F(a,b,e,6,i).
  \end{align*}
\end{Remark}

At this point we present a flowchart that indicates ultimately how we
will prove Theorem \ref{FirstHalfAreDoubleWithCases}.  We have just
seen (Lemmas \ref{lem:Fa} and \ref{lem:Fn}) two crucial observations
about the function $F$.  Using these, a few elementary computations
would allow us to establish Theorem \ref{FirstHalfAreDoubleWithCases}
for the vast majority of degree sequences of pure diagrams.  However,
as mentioned in the introduction, our reduction via Boij-S\"oderberg
theory requires that we consider \emph{all} degree sequences of pure
sub-diagrams of the betti diagram of $M$ and many of these degree
sequences are \emph{not} covered by the lemmas above.

\begin{figure}[h!]
  \centering \includegraphics[width=.8\textwidth]{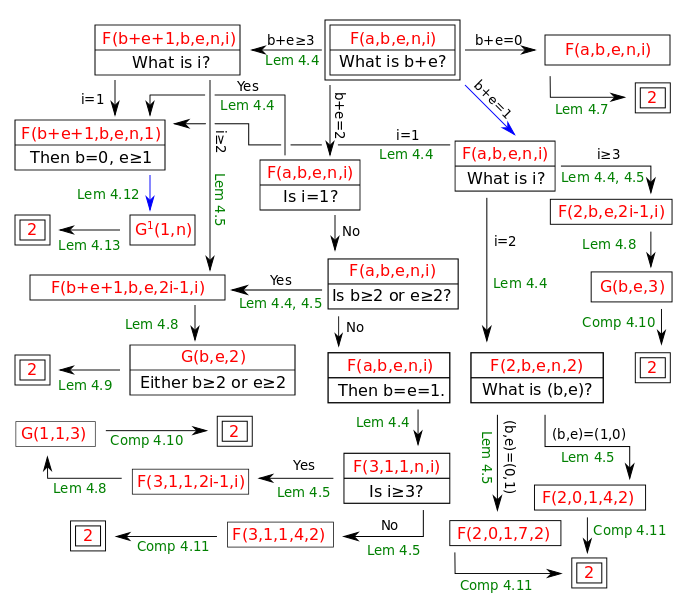}
  \caption{The proof when $n\geq 9$.  The red expression in each box
    is the current lower bound for $F(a,b,e,n,i)$; the black question
    tells one how to proceed.  Arrows are decorated with the possible
    answers to the questions (in black) and the lemma or computation
    used (in green) to obtain the new lower bound (i.e.\ arrows can be
    read as $\geq$ symbols).  The two blue arrows highlight the places
    where our argument differs for $n\in \{6,7,8\}$.}
  \label{fig:FlowN9}
\end{figure}

We begin in the upper right of the chart addressing the case of linear
resolutions. These correspond to the case when $b = e = 0$ and are
handled by the following lemma.
\begin{Lemma}\label{lem:Fbe0}
  If $b = e = 0$, then $F(a,0,0,n,i) \geq 2$.
\end{Lemma}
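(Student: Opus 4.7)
The plan is to substitute $b=e=0$ directly into the definition of $F$ and bound the resulting expression grouping by grouping. With $e=0$ the third grouping collapses to $1$, so it suffices to show that the product of the first two groupings is at least $2$.

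When $i=1$ the first grouping is empty (hence equals $1$), and $F(a,0,0,n,1)$ reduces to $\prod_{k=1}^{a-1}\frac{n+k}{k+1}$. The leading factor $\frac{n+1}{2}$ is at least $2$ since $n\geq 3$, and each subsequent factor $\frac{n+k}{k+1}$ is at least $1$, so the bound is immediate. For $i\geq 2$, the first grouping simplifies to the binomial coefficient $\binom{a+i-2}{i-1}$, which is at least $\binom{i}{i-1}=i\geq 2$ since $a\geq 2$. The second grouping $\prod_{k=1}^{a-1}\frac{n+k}{i+k}$ is then a product of factors each at least $1$ (using $i\leq n$ from the domain of $F$), yielding $F\geq 2\cdot 1=2$.

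The only subtlety worth noting is the bookkeeping around empty products when $i=1$ or $e=0$; once those are interpreted correctly, every inequality in the argument reduces to a term-by-term comparison of positive integers, so no delicate estimation is required. In particular, no case analysis on $n$ beyond the standing hypothesis $n\geq 3$ is needed, and the proof does not require the halfway restriction $i\leq\lceil n/2\rceil$ that appears elsewhere in Theorem \ref{FirstHalfAreDoubleWithCases}.
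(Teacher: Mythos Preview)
Your proof is correct and follows essentially the same approach as the paper: both arguments substitute $b=e=0$, note that the third grouping disappears, handle $i=1$ by bounding the leading factor $\frac{n+1}{2}\geq 2$ in the second grouping, and for $i\geq 2$ extract a factor of at least $2$ from the first grouping while bounding all remaining factors below by $1$. The only cosmetic difference is that you package the first grouping as the binomial coefficient $\binom{a+i-2}{i-1}\geq i$, whereas the paper pulls off the single factor $a/1\geq 2$ and bounds the rest term by term.
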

\begin{proof}
  If $i\geq 2$, then
  \begin{equation*}
    F(a,0,0,n,i)= \frac{a}{1}\frac{(a+1)\cdots(a+(i-2))}{(2)\cdots(i-1)}
    \frac{(n+1) \cdots (n + a -1)}{(i + 1)\cdots (i + a -1)}
    \geq \frac{a}{1}\geq 2.
  \end{equation*}
  On the other hand, if $i=1$ there are no terms in the first
  grouping.  Since $a\geq 2$ and $n\geq 3$, there is at least one term
  in the middle grouping and we have
  \begin{equation*}
    F(a,0,0,n,1)
    =\frac{(n+1)(n+2) \cdots (n + a-1)}{(2)(3)\cdots (a)}
    \geq \frac{(n+1)}{(2)}\geq \frac{4}{2}\geq 2.\qedhere
  \end{equation*}
\end{proof}

\noindent Our approach is now as follows: by Lemma \ref{lem:Fbe0} we
may assume that $b+e \geq 1$.  For fixed $b,e,n,i$ our regularity
assumption provides a minimum possible value of $a$: we have
$a + b + e - 1 \leq 2a - 2$ and thus $a \geq b + e + 1$.  In light of
Lemma \ref{lem:Fa}, it's natural to set $a=b+e+1$.  We can then apply
Lemma \ref{lem:Fn} and decrease $n$ to its minimum possible value of
$n = 2i-1$.  However we will only do this when $i\geq 2$, since we
only want to consider degree sequences with $n\geq 3$; our argument
will need modifications when $i=1$.  Thus, for $i\geq 2$ and
$b+e\geq 1$, we now consider the function $G(b,e,i)$ defined by making
these substitutions.
\begin{align*}
  G(b,e,i)
  &\defeq F(b+e + 1, b, e, 2i-1, i)\\
  &= \frac{(b+e + 1)\cdots(b + e + (i-1))}{(b+1)(b+2)\cdots(b+(i-1))}
    \frac{(2i) \cdots (2i + 2b + 2e - 1)}{(i + 1)\cdots (i + 2b + 2e)}
    \frac{e!}  {(i)\cdots (i + e - 1)}
\end{align*}

We remind the reader that our goal is to find a lower bound for
$\pi_i(D)$ and point out that at this point we have (for $b+e \geq 1$
and $i\geq 2$):
$$\pi_i(D) \geq \pi_i(D^i) \geq F(a,b,e,n,i) \geq G(b,e,i).$$

\begin{Lemma}\label{lem:Gi}
  $G$ is an increasing function of $i$: $G(b,e,i)\leq G(b,e,i+1)$.
\end{Lemma}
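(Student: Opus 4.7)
The plan is to show $G(b,e,i) \leq G(b,e,i+1)$ by computing the ratio $G(b,e,i+1)/G(b,e,i)$ directly. Each of the three groupings in the definition of $G$ picks up a single factor when $i$ is replaced by $i+1$. Writing $s = b + e$, tracking the telescoping cancellations in the numerator and denominator of each grouping gives a contribution of $(s+i)/(b+i)$ from the first grouping, $(s+i)(2s+2i+1)(i+1)/[i(2i+1)(2s+i+1)]$ from the second, and $i/(i+e)$ from the third. Multiplying these (the two factors of $i$ cancel) yields
$$\frac{G(b,e,i+1)}{G(b,e,i)} = \frac{(s+i)^2\,(2s+2i+1)(i+1)}{(b+i)(e+i)(2i+1)(2s+i+1)}.$$

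It then suffices to show that the numerator $N$ dominates the denominator $D$. I would expand both as polynomials in $b,e,i$ and observe, after a straightforward bookkeeping exercise, that
$$N - D = 2s^2i^2 + 2s^3i + 3si^2 + 5s^2i + 2s^3 + si + s^2 - be\,(2i^2 + 4si + 3i + 2s + 1).$$
The leading terms $2i^4$, $6si^3$, $3i^3$, $i^2$ of $N$ and $D$ cancel fortuitously, so the first piece has entirely non-negative coefficients; the only obstacle is the $be$ cross term. Applying the AM-GM bound $be \leq (b+e)^2/4 = s^2/4$ and then clearing denominators, the inequality $N \geq D$ reduces to
$$6s^2i^2 + 4s^3i + 12si^2 + 17s^2i + 6s^3 + 4si + 3s^2 \geq 0,$$
which is immediate. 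Equality holds precisely when $s = 0$, that is, $b = e = 0$, in which case $G \equiv 1$ and the conclusion is trivial.

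The main obstacle is really just the bookkeeping; there is no clever trick. The tempting approach would be to split the ratio as $[(s+i)^2/((b+i)(e+i))]\cdot[(2s+2i+1)(i+1)/((2i+1)(2s+i+1))]$, where the first factor is at least $1$ by AM-GM on the outer factors. Unfortunately, the second factor can be strictly less than $1$; in fact one computes $(2s+i+1)(2i+1) - (2s+2i+1)(i+1) = 2si$. So the slack in the first factor must be combined with the slack from $be \leq s^2/4$ to overcome the negative $-2si$ cross term, and the polynomial expansion above is the cleanest way to see that these two sources of slack together suffice.
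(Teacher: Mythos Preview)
Your proof is correct and follows essentially the same approach as the paper: compute the ratio $G(b,e,i+1)/G(b,e,i)$, cross-multiply, and verify the resulting polynomial inequality. The paper arrives at the identical ratio (without cancelling the common factor of $2i$), then expands $N-D$ directly in the variables $b,e,i$ rather than in $s=b+e$; in those variables every coefficient is already non-negative, so the AM--GM step $be\leq s^2/4$ is unnecessary. Indeed, if you substitute $s=b+e$ back into your expression for $N-D$ and expand, the $-be(\cdots)$ term is absorbed and you recover (up to the factor $2i$) exactly the paper's all-positive polynomial.
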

\begin{proof}
  We consider the quotient
  \begin{equation*}
    \frac{G(b,e,i+1)}{G(b,e,i)}
    = \frac{b+e + i}{b+i}
    \frac{(2i + 2b + 2e +1)(2i + 2b +2e)(i+1)}{(2i)(2i+1)(i+2b+2e +1)}
    \frac{i}{i+e}.    
  \end{equation*}
  We want this to be at least $1$.  When we cross-multiply and
  subtract we are left with the inequality:
  \begin{multline*}
    4 b^{3} i^{2}+4 b^{2} e i^{2}+4 b e^{2} i^{2}+4 e^{3} i^{2} +4
    b^{2} i^{3}+4 b e i^{3}+4 e^{2} i^{3}+4 b^{3} i+8b^{2} e i
    +8 b e^{2} i+4 e^{3} i\\
    +10 b^{2} i^{2}+14 b e i^{2}+10 e^{2} i^{2}+6 b i^{3}+6 e i^{3} +2
    b^{2} i+2 b e i+2 e^{2} i+2 b i^{2}+2 e i^{2} \geq 0
  \end{multline*}
  which is evident.
\end{proof}

\noindent In consideration of this, since $G(b,e,i) \geq G(b,e,2)$ for
all $i\geq 2$ we show, with a few minor exceptions, that
$G(b,e,2)\geq 2$ for relevant inputs.
\begin{Lemma}\label{lem:Gbe2}
  If either $b\geq 2$ or $e\geq 2$, then $G(b,e,2)\geq 2$.
\end{Lemma}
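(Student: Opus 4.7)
The plan is to evaluate $G(b,e,2)$ in closed form, reduce the desired inequality to a polynomial inequality in two variables, and then dispatch it via a short case split. Substituting $i = 2$ into the definition of $G$, the first grouping collapses to the single factor $(b+e+1)/(b+1)$, the second grouping telescopes as
\begin{equation*}
\frac{4 \cdot 5 \cdots (2b+2e+3)}{3 \cdot 4 \cdots (2b+2e+2)} = \frac{2b+2e+3}{3},
\end{equation*}
and the third grouping reduces to $e!/(2 \cdot 3 \cdots (e+1)) = 1/(e+1)$ (uniformly for $e \geq 0$, with the convention that an empty product is $1$). Thus
\begin{equation*}
G(b,e,2) = \frac{(b+e+1)(2b+2e+3)}{3(b+1)(e+1)}.
\end{equation*}

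Setting $s = b + e$ and clearing denominators, I would show that $G(b,e,2) \geq 2$ is equivalent to the polynomial inequality
\begin{equation*}
2s^2 - s - 3 \geq 6be.
\end{equation*}
Note the expression is symmetric in $b$ and $e$. By AM-GM, $be \leq s^2/4$, so for the target inequality it suffices to prove $2s^2 - s - 3 \geq 3s^2/2$, i.e., $s^2 - 2s - 6 \geq 0$. This holds for all integers $s \geq 4$, which takes care of the majority of cases at a stroke.

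For $s \leq 3$, the hypothesis that $\max(b,e) \geq 2$ restricts $(b,e)$ to the finite list $(2,0), (0,2), (3,0), (0,3), (2,1), (1,2)$, and I would verify the inequality $2s^2 - s - 3 \geq 6be$ for each. Explicitly, when $be = 0$ we have $s \geq 2$ and $2s^2 - s - 3 \geq 3 > 0$; when $(b,e) \in \{(2,1), (1,2)\}$ we have $s = 3$ and $be = 2$, giving $2(9) - 3 - 3 = 12 = 6 \cdot 2$, so the inequality holds with equality.

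There is no serious obstacle here, but two features are worth flagging. First, equality at $(b,e) = (2,1)$ and $(1,2)$ shows that the hypothesis of the lemma is tight: the case $(b,e) = (1,1)$ yields $G(1,1,2) = 21/12 < 2$ and must genuinely be excluded. Second, the uniform closed form for $G(b,e,2)$ that covers both $e = 0$ and $e \geq 1$ is important because the boundary case $e = 0$ corresponds to degree sequences where the last three entries are consecutive integers, and treating it together with the generic case avoids splintering the argument into additional subcases.
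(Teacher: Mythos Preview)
Your proof is correct and follows essentially the same approach as the paper: both compute the closed form $G(b,e,2) = \dfrac{(b+e+1)(2b+2e+3)}{3(b+1)(e+1)}$ and reduce $G(b,e,2)\geq 2$ to the same polynomial inequality (the paper writes it as $2b^2-2be+2e^2-b-e-3\geq 0$, which is your $2s^2-s-3\geq 6be$). The only cosmetic difference is the endgame: the paper rewrites the polynomial as $(b-e)^2+(b^2-b)+(e^2-e)-3$ and splits on whether $b=e$, whereas you invoke AM--GM for $s\geq 4$ and check the handful of cases with $s\leq 3$ directly.
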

\begin{proof}
  We simply compute
  \begin{align*}
    G(b,e,2)
    &= \frac{b+e + 1}{b+1}
      \frac{(4) \cdots (2b + 2e +3)}{(3)\cdots (2b + 2e+2)}
      \frac{e!}{(2)\cdots (e+1)}\\
    &= \frac{b+e+1}{b+1}\frac{2b+2e+3}{3}\frac{1}{e+1}.     
  \end{align*}
  This will be at least $2$ if and only if
  \begin{equation*}
    2 b^{2}-2 b e+2 e^{2}-b-e-3\geq 0.
  \end{equation*}
  Now
  \begin{equation*}
    2 b^{2}-2 b e+2 e^{2}-b-e-3 = (b-e)^2 +b^2 + e^2 -b -e -3
  \end{equation*}
  If $b =e$ then this is $2b^2 -2b - 3$ which will be nonnegative
  provided $b \geq 2$.  Otherwise, if either $b$ or $e$ is at least
  $2$ then one of $b^2-b$ or $e^2 - e$ will be at least $2$.  Thus if
  $b\neq e$ then
  \begin{equation*}
    (b-e)^2-3 + (b^2-b) + (e^2 - e) \geq 1 - 3 + 2 = 0.  \qedhere
  \end{equation*}
\end{proof}
\noindent Restricting our attention to the situation where $i\geq 2$,
the lemmas we have established are sufficient to conclude that
$F\geq 2$ for the vast majority of relevant inputs.  The remaining
cases (still assuming that $i\geq 2$) are treated via direct
computation.
\begin{Computation}\label{comp1}
  \begin{equation*}
    G(1,0,3)=2.1
    \qquad
    G(0,1,3)=2.1
    \qquad
    G(1,1,3)=2.4
  \end{equation*}
\end{Computation}
As $G(b,e,i)$ is an increasing function of $i$, these computations
will allow us to obtain the desired lower bound on $F$ when $i\geq 3$.
Indeed, either Lemma \ref{lem:Gbe2} applies or else $b+e =1$ and
$G(b,e,i)\geq G(b,e,3)$ which must be one of the numbers above.

We close with one final computation as well as a discussion of what
happens for $i = 1$. The reader may note that the values of $n$ in
these computations are creeping upwards; this is the first indication
for the hypothesis that $n$ be greater than $9$ in our main theorems.
\begin{Computation}\label{comp2}
  \begin{equation*}
    F(3,1,1,4,2)=2.33 \qquad
    F(2,1,0,4,2)= 2.5 \qquad
    F(2,0,1,7,2)=2
  \end{equation*}
\end{Computation}

\noindent We now close by handling the case $i = 1$.  Note that $i =1$
implies that $b = 0$.  We may assume that $e> 0$ and the assumption
that $\reg(D)\geq 2a-2$ implies that we may assume
$a = b + e+ 1 = e+1$.  What remains is to determine when
$$G^1(e,n) \defeq  F(e+1,0,e,n,1) \geq 2.$$
There is a finite set of inputs for which this lower bound fails, and
these are the source of the 36 betti diagrams of pure modules which
satisfy our regularity bound but to which Theorem
\ref{FirstHalfAreDoubleWithCases} does not apply.

\begin{Lemma}\label{lem:G1e}
  For all $n\geq 3$ and $e\geq 1$, we have
  \begin{equation*}
    F(e+1,0,e,n,1)\leq F(e+2,0,e+1,n,1)
  \end{equation*}
  That is, for all $n$, the function $G^1(e,n)\defeq F(e+1,0,e,n,1)$
  is increasing as a function of $e$.
\end{Lemma}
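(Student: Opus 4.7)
The plan is to compare $G^1(e+1,n)/G^1(e,n)$ to $1$ by directly unpacking both quantities from Definition \ref{def:F}. First I would write out $G^1(e,n) = F(e+1,0,e,n,1)$: since $i=1$ the first grouping is empty, and with $a+b+e-1=2e$ the second grouping becomes $(n+1)(n+2)\cdots(n+2e)$ over $2\cdot 3\cdots(2e+1)$, while the third grouping is $e!/[n(n+1)\cdots(n+e-1)]$. The expression $G^1(e+1,n)=F(e+2,0,e+1,n,1)$ has an analogous shape with $2e$ replaced by $2e+2$, $e!$ replaced by $(e+1)!$, and the third grouping denominator extended by one factor to $n(n+1)\cdots(n+e)$.

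Next I would form the ratio and cancel in groups. The factorial contribution from the third grouping is $(e+1)!/e!=e+1$ multiplied by $1/(n+e)$ coming from the telescoping of the product in the denominator. The second grouping contributes $(n+2e+1)(n+2e+2)/[(2e+2)(2e+3)]$. Using $(2e+2)=2(e+1)$, the factor $e+1$ cancels and one is left with
\begin{equation*}
\frac{G^1(e+1,n)}{G^1(e,n)} \;=\; \frac{(n+2e+1)(n+2e+2)}{2(2e+3)(n+e)}.
\end{equation*}

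It then suffices to verify $(n+2e+1)(n+2e+2) \geq 2(2e+3)(n+e)$. Expanding, both sides have identical $e$-dependent contributions: the left side equals $n^2+(4e+3)n+4e^2+6e+2$ and the right side equals $(4e+6)n+4e^2+6e$. The difference collapses dramatically to $n^2-3n+2=(n-1)(n-2)$, which is nonnegative for all $n\geq 2$, and in particular for $n\geq 3$.

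The main obstacle is the somewhat delicate bookkeeping in forming the ratio, since the second and third groupings have different numbers of factors in $G^1(e+1,n)$ than in $G^1(e,n)$, and one must carefully track which terms survive after cancellation. Once the closed form of the ratio is in hand, the final inequality is an essentially trivial algebraic identity; the striking cancellation of all $e$-dependent terms in the difference is the reason the lemma holds uniformly in $e$.
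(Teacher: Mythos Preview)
Your proof is correct and follows essentially the same route as the paper: both form the ratio $G^1(e+1,n)/G^1(e,n)$, simplify it to $\dfrac{(n+2e+1)(n+2e+2)}{(2e+2)(2e+3)}\cdot\dfrac{e+1}{n+e}$, and reduce the inequality to the nonnegativity of $(n-1)(n-2)$. The only cosmetic difference is that you cancel the factor $e+1$ against $2e+2=2(e+1)$ before cross-multiplying, whereas the paper carries it through and ends with $(n-1)(n-2)(e+1)\geq 0$.
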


\begin{proof}
  As usual, we want to establish the following inequality.
  \begin{equation*}
    \frac{F(e+2,0,e+1,n,1)}{F(e+1,0,e,n,1)}
    =\frac{(2e+n+2)(2e+n+1)}{(2e+3)(2e+2)}\cdot
    \frac{(e+1)}{(e+n)}\geq 1
  \end{equation*}
  Cross-multiplying, simplifying, and factoring, we find that this
  equivalent to
  \begin{equation*}
    (n-1)(n-2)(e+1)\geq 0,
  \end{equation*}
  which is evident as $n\geq 3$ and $e\geq 1$.
\end{proof}
\begin{Lemma}\label{lem:G1n9}
  If $n\geq 9$, then $G^1(1,n)\geq 2$.
\end{Lemma}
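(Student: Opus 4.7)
The plan is to reduce $G^1(1,n)$ to an explicit closed form and then verify the inequality as a simple quadratic statement. By Definition \ref{def:F}, with $a=2$, $b=0$, $e=1$, $i=1$ we have $a+b+e-1 = 2$, so the first grouping is empty, the middle grouping is
\[
\frac{(n+1)(n+2)}{(2)(3)} = \frac{(n+1)(n+2)}{6},
\]
and the third grouping is $\frac{1!}{n} = \frac{1}{n}$. Hence
\[
G^1(1,n) \;=\; F(2,0,1,n,1) \;=\; \frac{(n+1)(n+2)}{6n}.
\]

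Next, I would observe that the inequality $G^1(1,n)\geq 2$ is equivalent to the polynomial inequality $(n+1)(n+2)\geq 12n$, i.e.\
\[
n^2 - 9n + 2 \;\geq\; 0.
\]
For $n\geq 9$ this is immediate, since $n^2 - 9n + 2 = n(n-9) + 2 \geq 2 > 0$. This completes the argument.

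As a sanity check on the threshold, plugging in $n=9$ gives $G^1(1,9) = \frac{10\cdot 11}{54} = \frac{55}{27} \approx 2.037$, which is just barely above $2$; this is consistent with the observation in the paper that the hypothesis $n\geq 9$ is tight and marks the first place where the bound $F\geq 2$ begins to hold in this branch. There is no real obstacle here: once the closed form is in hand, the proof is a one-line quadratic estimate. The only mild care needed is in reading off the empty products correctly when $i=1$, which I have made explicit above.
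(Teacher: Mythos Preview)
Your proof is correct and follows essentially the same approach as the paper: compute the closed form of $G^1(1,n)=F(2,0,1,n,1)$ and reduce the inequality $G^1(1,n)\geq 2$ to the quadratic condition $n^2-9n+2\geq 0$, which holds for $n\geq 9$. Your explicit handling of the empty first grouping and the resulting expression $\frac{(n+1)(n+2)}{6n}$ is in fact slightly cleaner than the paper's displayed factorization, but the argument and the decisive inequality are identical.
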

\begin{proof}
  We compute
  \begin{equation*}
    G^1(1,n)
    =\frac{(n+2)!}{n!}\cdot
    \frac{(n-1)!}{n!}\cdot
    \frac{1}{3}.
  \end{equation*}
  This is greater than $2$ if and only if $n^2-9n+2\geq 0$, which is
  the case for $n$ at least $9$.
\end{proof}
As before, some sporadic cases will be handled by a few direct
computations.
\begin{Computation}\label{comp3}
  \begin{equation*}
    F(3,1,1,6,2)=4.2\qquad G^1(2,6)=2.
  \end{equation*}
\end{Computation}
We have need of one final computation that will reduce from infinite
to finite the number of degree sequences of pure diagrams that do not
satisfy the hypotheses of our theorem.  Indeed, if the regularity
bound is strengthened by one and we assume that
$\reg(D)\leq 2a-\mathbf{3}$, then the minimum possible value of $a$ is
$b+e+2$.  We compute:
\begin{Computation}\label{comp4}
  For $i\in\{1,2,3,4\}$ and $(b,e)\in\{(1,0),(0,1)\}$, we have
  $F(3,b,e,6,i)\geq 2$.
\end{Computation}

We are now ready to put the jigsaw puzzle together and prove Theorem
\ref{FirstHalfAreDoubleWithCases}.  For the reader's convenience, we
have restated it below in an equivalent form.
\begin{Proposition}\label{FirstHalfAreDoubleEquivalent}
  Let $D$ be a degree sequence with $\reg(D)\leq 2a-2$ and $n\geq 9$.
  Then for each $1\leq i\leq \lceil n/2\rceil$,
  $\pi_i(D)\geq 2{n \choose i}$.  If $n\in\{6,7,8\}$ and either
  $a\neq 2$ or $b+e \neq 1$, then the same conclusion holds.
\end{Proposition}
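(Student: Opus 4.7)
The plan is to follow the roadmap encoded in the flowchart (Figure \ref{fig:FlowN9}) and assemble the pieces that have already been established into a single case analysis. The opening move is the chain of inequalities $\pi_i(D)\geq \pi_i(D^i) \geq F(a,b,e,n,i){n \choose i}$, so throughout the proof it suffices to verify $F(a,b,e,n,i)\geq 2$ on the relevant domain. I would begin at the top of the flowchart by splitting on whether the resolution is linear, i.e.\ whether $b=e=0$. The linear case is immediately dispatched by Lemma \ref{lem:Fbe0}, which handles both $i=1$ and $i\geq 2$ separately.

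Assuming now $b+e\geq 1$, the regularity hypothesis $\reg(D)\leq 2a-2$ forces $a\geq b+e+1$, and by Lemma \ref{lem:Fa} I may replace $a$ by its minimum value $b+e+1$ without loss. Next I would separate the case $i\geq 2$ from $i=1$. For $i\geq 2$, provided $n\geq 2i-1$ (which is guaranteed by the hypothesis $i\leq \lceil n/2\rceil$), Lemma \ref{lem:Fn} lets me replace $n$ by $2i-1$, so the problem reduces to showing $G(b,e,i)\geq 2$. By Lemma \ref{lem:Gi}, $G(b,e,i)$ is non-decreasing in $i$, so after fixing $i=2$ I invoke Lemma \ref{lem:Gbe2} to cover the case $\max(b,e)\geq 2$. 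The remaining configurations $(b,e)\in\{(1,0),(0,1),(1,1)\}$ split by the value of $i$: for $i\geq 3$ they are killed by Computation \ref{comp1} combined with monotonicity of $G$ in $i$, while for $i=2$ they are killed by Computation \ref{comp2} (noting that $F(2,0,1,n,2)$ requires $n\geq 7$, which is where the assumption $n\geq 9$ first becomes essential).

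The case $i=1$ is where I would exercise the most care, since here $b=0$ automatically and the reduction in $n$ from Lemma \ref{lem:Fn} is not usable. Instead, I would set $G^1(e,n)=F(e+1,0,e,n,1)$, use Lemma \ref{lem:G1e} to reduce to $e=1$, and then apply Lemma \ref{lem:G1n9} to conclude $G^1(1,n)\geq 2$ for all $n\geq 9$. This completes the proof for $n\geq 9$. For $n\in\{6,7,8\}$ and the additional hypothesis that either $a\neq 2$ or $b+e\neq 1$, essentially the same route works: the only obstruction in the previous chain is the step that required $n\geq 9$ in Lemma \ref{lem:G1n9} (and similarly the $n=7$ threshold in Computation \ref{comp2}). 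The excluded hypothesis $a=2$, $b+e=1$ corresponds exactly to $a=b+e+1=2$, which in turn forces $i=1$ (since $i\geq 2$ lands back in the $G$-analysis that was already independent of $n\leq 8$); under the additional assumption $a\geq 3$ or $b+e\geq 2$, I can strengthen the regularity to $\reg(D)\leq 2a-3$, giving $a\geq b+e+2$, and then Computation \ref{comp4} directly covers the finitely many remaining small-$n$ cases. Computation \ref{comp3} pins down the last stray values needed when $n=6$.

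The main obstacle is purely bookkeeping: the function $F$ is not jointly monotone, so each reduction (increasing $a$, decreasing $n$, shifting $i$) is only available under a specific hypothesis, and the residual parameter regions must be inspected by hand. In particular, Lemma \ref{lem:Fn} requires $\reg(D)\leq 2a-2$ sharply, which is why the 36 exceptional diagrams of Remark \ref{The 36 Special Cases} appear exactly at the boundary $a=b+e+1$ with $i=1$ and $n\leq 8$, and why they must be excluded from the statement rather than absorbed into the general argument.
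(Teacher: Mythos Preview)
Your approach matches the paper's proof essentially step for step: the same reduction $\pi_i(D)\geq F(a,b,e,n,i){n\choose i}$, the same cascade through Lemmas \ref{lem:Fbe0}, \ref{lem:Fa}, \ref{lem:Fn}, \ref{lem:Gi}, \ref{lem:Gbe2}, and the same use of Computations \ref{comp1}--\ref{comp4} and Lemmas \ref{lem:G1e}, \ref{lem:G1n9} to mop up the boundary cases. The $n\geq 9$ portion is correct as written.

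Your final paragraph on $n\in\{6,7,8\}$, however, contains two misstatements. First, the assertion that the exclusion $a=2,\ b+e=1$ ``forces $i=1$'' is wrong: for $n=6$, $(b,e)=(0,1)$, $i=2$, one computes $F(2,0,1,6,2)=28/15<2$, so the exclusion is genuinely needed at $i=2$ as well (this is precisely the $n=7$ threshold in Computation \ref{comp2} that you yourself flagged). Second, the claim that ``$a\geq 3$ or $b+e\geq 2$'' lets you strengthen to $\reg(D)\leq 2a-3$ is false: if $b+e\geq 2$ and $a=b+e+1$ then $\reg(D)=2a-2$ exactly, so Computation \ref{comp4} does not apply. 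The correct split, which the paper makes explicit, is: when $b+e=1$ and $a\geq 3$ use Computation \ref{comp4}; when $b+e\geq 2$ and $i=1$ (so $b=0$, $e\geq 2$) use Lemma \ref{lem:Fn} to reduce to $n=6$, then Lemma \ref{lem:G1e} to reduce to $e=2$, then $G^1(2,6)=2$ from Computation \ref{comp3}. You do cite Computation \ref{comp3} at the very end, so the right ingredient is present, but the logic connecting it is garbled and the sentence about strengthening regularity should be deleted.
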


\begin{figure}[h!]
  \centering \includegraphics[width=.8\textwidth]{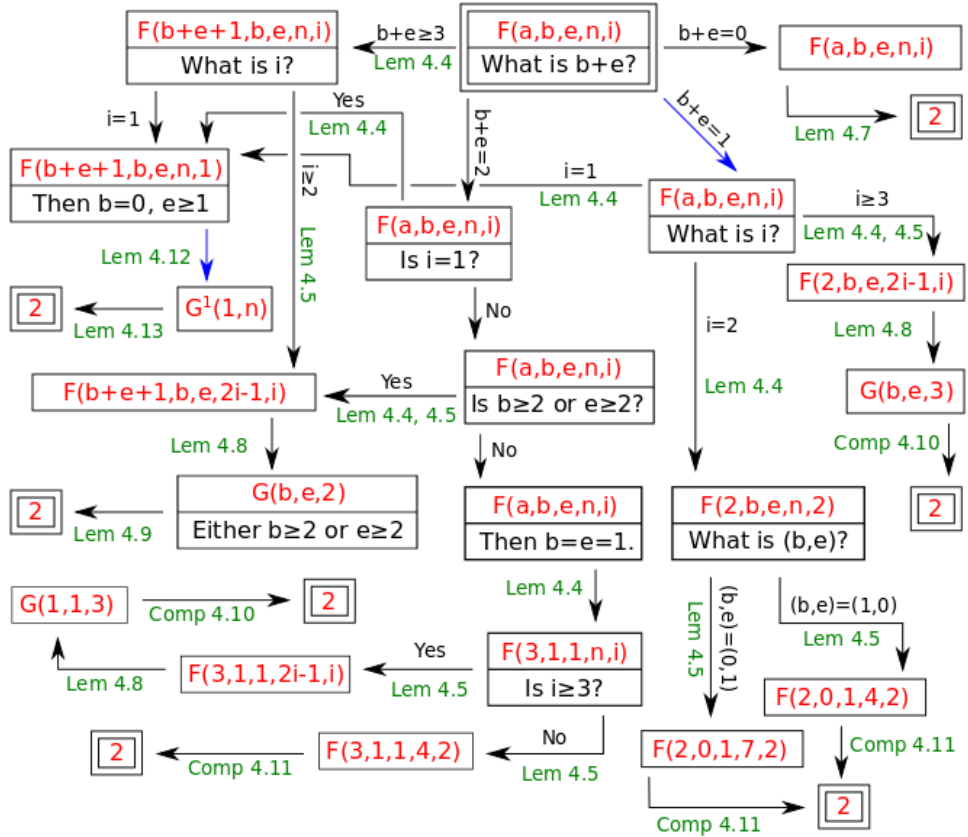}
  \caption{The proof when $n\geq 9$.  The red expression in each box
    is the current lower bound for $F(a,b,e,n,i)$; the black question
    tells one how to proceed.  Arrows are decorated with the possible
    answers to the questions (in black) and the lemma or computation
    used (in green) to obtain the new lower bound (i.e.\ arrows can be
    read as $\geq$ symbols).  The two blue arrows highlight the places
    where our argument differs for $n\in \{6,7,8\}$.}
\end{figure}

\begin{proof}[Proof of Proposition \ref{FirstHalfAreDoubleEquivalent}]
  The proof amounts to piecing together the lemmas and computations
  above and is depicted in the flowchart (Figure \ref{fig:FlowN9}).  A
  key point is that for a fixed degree sequence $D$, while $D^i$ (and
  the associated nonlinear parts $b$ and $e$) depends on the value of
  $i$, the sum $b+e$ of $D^i$ is a function only of the original
  degree sequence $D$ and not of $i$.  For $n\geq 9$, refer to the
  flow chart.

  If the resolution is linear so that $b+e=0$, then Lemma
  \ref{lem:Fbe0} applies to give the desired conclusion.  If
  $b+e\geq 3$, then we apply Lemma \ref{lem:Fa} and decrease $a$ to
  its minimum possible value while maintaining our regularity
  assumption.  Then, if $i\geq 2$, we apply Lemma \ref{lem:Fn},
  decreasing $n$ to get
  \begin{equation*}
    F(a,b,e,n,i)\geq F(b+e+1,b,e,n,i)\geq F(b+e+1,b,e,2i-1,i)=G(b,e,i).
  \end{equation*}
  Since $b+e\geq 3$, either $b\geq 2$ or $e\geq 2$ regardless of the
  value of $i$.  Thus, in all cases we may apply Lemma \ref{lem:Gi}
  decreasing the value of $i$ and then apply Lemma \ref{lem:Gbe2} to
  conclude
  \begin{equation*}
    F(a,b,e,n,i)\geq G(b,e,i)\geq G(b,e,2)\geq 2.
  \end{equation*}
  If $i=1$, we still apply Lemma \ref{lem:Fa}.  Then we note that this
  implies $b=0$.  Now Lemmas \ref{lem:G1e} and \ref{lem:G1n9} allows
  us to conclude
  \begin{equation*}
    F(a,b,e,n,i)
    \geq F(b+e+1,b,e,n,i)=F(e+1,0,e,n,1)=G^1(e,n)
    \geq G^1(1,n)
    \geq 2.
  \end{equation*}
  Now if $b+e=2$, the above argument fails only for those values of
  $i$ where $b=e=1$ (because Lemma \ref{lem:Gbe2} fails); when $i=1$,
  the argument needs no modification.  If $b=e=1$ and $i\geq 3$, then
  we apply Lemmas \ref{lem:Fa}, \ref{lem:Fn}, and \ref{lem:Gi} just as
  above only this time we use Computation \ref{comp1} to conlcude
  \begin{equation}\label{eq:n9be2}
    F(a,1,1,n,i)\geq F(3,1,1,n,i)\geq F(3,1,1,2i-1,i)=G(1,1,i)\geq G(1,1,3)>2.
  \end{equation}
  If $i=2$, then rather than decreasing $n$ to $2i-1=3$ in applying
  Lemma \ref{lem:Fn}, we set $n=4$ and use Computation \ref{comp2}.
  \begin{equation*}
    F(a,1,1,n,2)\geq F(3,1,1,n,2)\geq F(3,1,1,4,2)\geq 2.
  \end{equation*}
  If $b+e=1$, the chain of inequalities \eqref{eq:n9be2} still holds
  for $i\geq 3$ and the logic from above still applies for $i=1$.
  Thus, the only remaining case is $i=2$ and our assumptions imply
  $(b,e)\in\{(0,1),(1,0)\}$.  When $(b,e)=(0,1)$ (resp.\
  $(b,e)=(1,0)$), apply Lemma \ref{lem:Fn} to decrease $n$ to 7
  (resp. 4), then apply Computation \ref{comp2} to get
  \begin{equation*}
    F(a,b,e,n,2)\geq F(2,b,e,n,2)\geq 2.
  \end{equation*}
  If $n\in \{6,7,8\}$, the proof differs only in a few places and
  these are depicted in the flow chart by two blue arrows.  The arrow
  on the left hand side concerns the setting where $b+e\geq 3$ and
  $i=1$, which implies that $b=0$ and $e\geq 2$.  This time we apply
  Lemma \ref{lem:Fn} and decrease $n$ to the value of $6$, then apply
  Lemma \ref{lem:G1e} setting $e=2$ and use Computation \ref{comp3}
  \begin{equation*}
    F(a,b,e,n,i)\geq F(e+1,0,e,n,1)\geq F(e+1,0,e,6,1)=G^1(e,6)\geq G^1(2,6)=2
  \end{equation*}
  The second blue arrow concerns the case that $b+e=1$, and for
  finitely many degree sequences, our method fails here.  If
  $\reg(D)\leq 2a-3$, then we apply Lemma \ref{lem:Fa} decreasing $a$
  to the minimum possible value of $a=b+e+1=3$.  Next apply Lemma
  \ref{lem:Fn} and set $n=6$.  Noting that $(b,e)\in\{(1,0),(0,1)\}$,
  we use computation \ref{comp4} to obtain
  \begin{equation*}
    F(a,b,e,n,i)
    \geq F(3,b,e,n,i)
    \geq F(3,b,e,6,i)\geq 2.\qedhere
  \end{equation*}
\end{proof}


\section*{Acknowledgments}
\noindent We thank Daniel Erman for inspiring this project as well as
for the many conversations about Boij-S\"oderberg theory over the
years.  We thank Craig Huneke for the suggestion to look at how the
sum of the betti numbers behaves with respect to these
Boij-S\"oderberg decompositions.  A portion of this research was
conducted at the Fields Institute and the second author thanks them
for their hospitality during that period.  Finally, we are grateful
for helpful conversations with David Eisenbud, Srikanth Iyengar,
Anurag Singh, and Mark Walker.

\bibliographystyle{plain} \bibliography{References}
\end{document}